\newtheorem{thm}{Theorem}
\newtheorem*{thm*}{Theorem}
\newtheorem{lem}[thm]{Lemma}
\newtheorem{prop}[thm]{Proposition}
\newtheorem{cor}[thm]{Corollary}
\theoremstyle{definition}
\newtheorem{rem}[thm]{Remark}
\newcommand{\CC}{{\mathbb C}}
\newcommand{\ZZ}{{\mathbb Z}}
\newcommand{\RR}{{\mathbb R}}
\newcommand{\cX}{{\mathcal X}}
\newcommand{\cY}{{\mathcal Y}}
\newcommand{\G}{{\Gamma}}
\newcommand{\la}{\langle}
\newcommand{\ra}{\rangle}
\newcommand{\g}{\gamma}
\newcommand{\x}{\times}
\DeclareMathOperator{\Spec}{Spec}
\DeclareMathOperator{\id}{Id}
\DeclareMathOperator{\Id}{Id}
\DeclareMathOperator{\GL}{GL}
\DeclareMathOperator{\SU}{SU}
\DeclareMathOperator{\U}{U}
\DeclareMathOperator{\SL}{SL}
\DeclareMathOperator{\Sym}{Sym}    
\DeclareMathOperator{\diag}{diag}
\title{Geometry of $\SU(3)$-character varieties of torus knots}
\author{\'Angel Gonz\'alez-Prieto}
\address{Departamento de \'Algebra, Geometr\'ia y Topolog\'ia, Facultad de Ciencias Matem\'aticas, Universidad Complutense de Madrid, Plaza Ciencias 3, 28040 Madrid Spain.}
\address{Instituto de Ciencias Matem\'aticas (CSIC-UAM-UCM-UC3M), C.\ Nicolás Cabrera 13-15, 28049 Madrid Spain.}\email{angelgonzalezprieto@ucm.es}
\author{Javier Mart\'inez}
\address{Departamento de Matem\'atica Aplicada, Ciencia e Ingeniería de los Materiales y Tecnología Electrónica. E.S. Ciencias Experimentales y Tecnología, Universidad Rey Juan Carlos, C.\ Tulipán 0, 28933 Móstoles, Madrid Spain.}
\email{javier.martinezmar@urjc.es}
\author{Vicente Mu\~noz}
\address{Departamento de \'Algebra, Geometr\'ia y Topolog\'ia, Facultad de Ciencias Matem\'aticas, Universidad Complutense de Madrid, Plaza Ciencias 3, 28040 Madrid Spain.}\email{vicente.munoz@ucm.es}
\begin{document}

\begin{abstract}
We describe the geometry of the character variety of representations of the knot group $\G_{m,n}=\la x,y| x^n=y^m\ra$
into the group
$\SU(3)$, by stratifying the character variety into strata correspoding to totally reducible representations,
representations decomposing into a $2$-dimensional and a $1$-dimensional representation, and 
irreducible representations, the latter of two types depending on whether the matrices have 
distinct eigenvalues, or one of the matrices has one eigenvalue of multiplicity $2$. We describe how
the closure of each stratum meets lower strata, and use this to compute the compactly supported
Euler characteristic, and to prove that the inclusion of the character variety for $\SU(3)$ into the
character variety for $\SL (3,\CC)$ is a homotopy equivalence. 
\end{abstract}

\maketitle

\begin{center}
{\center \em Dedicated to Prof.\ Jos\'e  Manuel Rodr\'{\i}guez Sanjurjo on the ocassion of his 70th birthday.}
\end{center}

\section{Introduction}\label{sec:introduction}

Let $\G$ be a finitely generated group a $G$ a real or complex algebraic group. A
\textit{representation} of $\G$ into $G$ is a group homomorphism $\rho: \G\to G$.
Consider a presentation $\G=\la \g_1,\ldots, \g_k | \{r_{\lambda}\}_{\lambda \in \Lambda} \ra$, 
where $\Lambda$ is the (possibly infinite) indexing set of relations of $\G$. The map $\rho$ is completely
determined by the $k$-tuple $(A_1,\ldots, A_k)=(\rho(\g_1),\ldots, \rho(\g_k))$
subject to the relations $r_\lambda(A_1,\ldots, A_k)=\id$, for all $\lambda \in \Lambda$. 

In this way, the set of representations of $\G$ into $G$ is in bijection with the set
 \begin{align}\label{eq:rep-var}
 \begin{split}
R(\G, G) &=\, \{(A_1,\ldots, A_k) \in G^k \, | \,
 r_\lambda(A_1,\ldots, A_k)=\id ,\,\, \textrm{for all } \lambda \in \Lambda\, \}\subset G^{k}\, .
 \end{split}
 \end{align}
Notice that when $G$ is an affine algebraic group, even if the set of relations $\Lambda$ is infinite,
the set $R(\G, G)$ is defined by finitely many equations thanks to the noetherianity of the coordinate ring of $G^k$. 

In this setting, two representations $\rho$ and $\rho'$ are said to be
equivalent if there exists $g \in G$ such that $\rho'(\gamma)=g^{-1} \rho(\gamma) g$,
for every $\gamma \in \G$. In the case that there is a faithful representation $G \subset \GL(V)$, this means that $\rho$ and $\rho'$ are the same representation up to a $G$-change of basis of $V$. The moduli space of representations can be thus obtained as the GIT quotient
 $$
 X(\G, G) = R(\G, G) \hspace{-1pt}\sslash\hspace{-1pt} G\, .
 $$
Recall that, by definition, the GIT quotient of the affine variety $R(\G, G) = \Spec(A)$ under the action of a reductive group $G$ is 
$R(\G, G) \hspace{-1pt}\sslash\hspace{-1pt} G = \Spec A^G$, where $A^{G}$ is the (finitely generated) $k$-algebra of $G$-invariant elements of $A$.

An important instance is $G = \GL(r, \CC)$, in which we recover the classical notion of a linear representation of $\G$ as a $\G$-module structure on the vector space $\CC^r$. This case has been thoroughly studied in \cite{Hausel-Rodriguez-Villegas:2008}, among others. It is worth noticing that, when $\G$ is actually a finite group, the vector space $\CC^r$ can be equipped with a $\G$-invariant hermitian metric. Hence, any representation $\rho: \G \to \GL(r, \CC)$ descends to an $\U(r)$-representation
	\[
\begin{displaystyle}
   \xymatrix
   {	\G \ar[rr]^{\rho} \ar@{-->}[rrd]_{\tilde{\rho}} && \GL(r, \CC) \\
    && \U(r) \ar@{^{(}->}[u]
      }
\end{displaystyle}   
\]
In other words, this means that $X(\G, \GL(r, \CC)) = X(\G, \U(r))$. However, in the general case in which $\G$ is only finitely generated, such an invariant metric may not exist so $X(\G, \U(r))$ is only a real subvariety of $X(\G, \GL(r, \CC))$. Similar considerations can be done in the case in which we fix the determinant of the representation, so we analyze the descending property of representations induced by the inclusion $\SU(r) \hookrightarrow \SL(r. \CC)$, which exhibits $X(\G, \SU(r))$ as a real subvariety of $X(\G, \SL(r, \CC))$.

The properties of these subvarieties have been widely studied in the literature, as in \cite{acosta2019} or \cite{casimiro2014}. 
Furthermore, in \cite{florentino2020flawed} (see also \cite{bergeron2015topology,florentino2009topology,florentino2014topology}) the authors proved that, when $\G$ is a free product of nilpotent groups or a star-shaped RAAG (Right Angled Artin Groups), 
the inclusion $X(\G, \U(r)) \hookrightarrow X(\G, \SL(r, \CC))$ is a deformation retract for any reductive group $G$ (a property called flawed). On the contrary, for $\G = \pi_1(\Sigma_g)$, the fundamental group of a compact orientable surface $\Sigma_g$
of genus $g \geq 2$, 
this inclusion is never a homotopy equivalence when $G$ is reductive and non-abelian (it is said that $\G$ is a flawless group in the language of  \cite{florentino2020flawed}). These character varieties have been extensively studied, as in \cite{ho2018textit} for $g =2$ and $G = \SU(2)$ or in \cite{goldman2021mapping} regarding the ergodic properties of the action of the mapping class group of $\Sigma_g$.

However, the case in which $\G = \pi_1(S^3 - K)$ is the fundamental group of the $3$-dimensional complement of a knot $K \subset S^3$ is not fully understood. The analyses of \cite{martinezmun2015su2} and \cite{munoz2009sl} prove that such inclusion is a deformation retract in the case $\SU(2) \hookrightarrow \SL(2, \CC)$ and $\G$ the torus knot group. However, despite the geometry of the $\SL(3, \CC)$-character varieties of torus knots has been studied in \cite{munozporti2016} and the $\SL(4, \CC)$-character varieties in \cite{gonzalez2020motive}, almost nothing in known for the compact counterparts $\SU(r)$ for $r \geq 3$.

The goal of this paper is to give the first steps towards this aim and to study the geometry of the $\SU(r)$-character varieties of torus knots for $r \geq 3$, with particular attention to the case $r = 3$. In Section \ref{sec:rep-var} we discuss some generalities about these character varieties, proving in particular that $\SU(r)$-representations are always semi-simple.

This property allows us to stratify the $\SU(r)$-character variety in a natural way depending on the dimensions of the irreducible pieces appearing in its semi-simple decomposition, as discussed in Section \ref{sec:stratification}. Two strata can be fully understood in this setting: the stratum of totally reducible representations (Section \ref{sec:totally-reducible}) and the one of irreducible representations (Section \ref{sec:irreducible}). In the former case, we show that the totally irreducible representations form a fiber bundle over $S^1$ with fiber a certain $(r-1)$-dimensional simplex. In the latter, we characterize such irreducible representations as a particular subspace of the bi-orbit space $(S^1)^r \backslash \U(r) \,\slash (S^1)^r$.

This description is exploited in Section \ref{sec:u2-char} to describe the geometry of the $\U(2)$-character variety of torus knots. However, the core of the work starts at Section \ref{sec:SU3char}. There, we provide explicit expression of each of the strata that comprise the $\SU(3)$-character variety. To be precise, we obtain the following result.

\begin{thm*}
Let $\Gamma$ be the $(n,m)$-torus knot group with $n$ and $m$ coprime. The $\SU(3)$-character variety of $\G$ decomposes into strata
$$
	X(\G, \SU(3)) = \cY_{(1,1,1)} \sqcup \cY_{(3)} \sqcup \cY_{(2,1)},
$$
corresponding to totally reducible representations, irreducible representations and representations that decompose into the direct sum of a  $2$-dimensional representation and a $1$-dimensional representation, respectively. The closure of each statum is as follows:
\begin{itemize}
	\item $\cY_{(1,1,1)}$ is already a closed $2$-dimensional triangle.
	\item $\overline{\cY}_{(3)}$ is the disjoint union of two spaces. The first one is made of $\frac{1}{2}(n-1)(m-1)(n+m-4)$ disjoint closed $2$-dimensional triangles. The second one is a disjoint union of $\frac{1}{12}(n-1)(n-2)(m-1)(m-2)$ spaces which fiber over a closed $2$-dimensional triangle, with fiber $S^2$ over the interior. The fibration over the boundary of the triangle is isomorphic to a closed M\"obius band.
	\item  $\overline{\cY}_{(2,1)}$ is a disjoint union of $\lfloor\frac12(n-1)\rfloor \frac12(m-1)$ closed cylinders and, when $n$ is even, $\frac12(m-1)$ closed M\"obius bands.
\end{itemize}
\end{thm*}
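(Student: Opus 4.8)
\emph{Overall strategy.} The proof proceeds stratum by stratum. The structural input is that $z=x^n=y^m$ generates the centre of $\G$: for a semi-simple $\rho$ --- and every $\SU(3)$-representation is semi-simple by Section~\ref{sec:rep-var} --- Schur's lemma makes $\rho(z)$ scalar on each isotypic block, and taking determinants forces this scalar to be a cube root $\lambda$ of $1$ on a $3$-dimensional block (and pins it analogously on $2$- and $1$-dimensional blocks). Once $\rho(z)=\lambda\Id$, the eigenvalues of $A:=\rho(x)$ are $n$-th roots of $\lambda$ and those of $B:=\rho(y)$ are $m$-th roots of $\lambda$, and any such choice makes $A^n=B^m=\lambda\Id$ automatically; so the relation imposes nothing beyond matching $\lambda$, each stratum is a disjoint union over the (locally constant) discrete eigenvalue data, and on each piece the remaining moduli record the relative position of the two eigenspace decompositions. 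For $\cY_{(1,1,1)}$ this is immediate: such a representation is conjugate into a fixed maximal torus $T\subset\SU(3)$, a homomorphism $\G\to T$ is just a point of $T$ because $\G^{\mathrm{ab}}\cong\ZZ$, and quotienting by the Weyl group $S_3$ gives $\cY_{(1,1,1)}=T/S_3$, a closed $2$-simplex; being the deepest stratum it is automatically closed.

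\emph{The stratum $\cY_{(3)}$.} Write $\rho(z)=\lambda\Id$. If one of $A,B$, say $A$, has an eigenvalue of multiplicity $2$ and $B$ is regular, then fixing $A=\diag(a,a,c)$ leaves stabiliser $S(\U(2)\x\U(1))$ while $B$ sweeps out its conjugacy class $\cong\SU(3)/T$, so the representations with these eigenvalues form the bi-orbit space $S(\U(2)\x\U(1))\backslash\SU(3)/T$ of Section~\ref{sec:irreducible}; I would identify this as a closed $2$-simplex whose interior is the irreducible locus and whose three edges carry the reducible degenerations (a common invariant line or plane appears between $B$ and the line/plane splitting of $A$), which lie in $\cY_{(2,1)}$ and $\cY_{(1,1,1)}$. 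If $A$ and $B$ are both regular the bi-orbit space is $T\backslash\SU(3)/T$, which is $4$-dimensional, and I would show it fibres over a closed $2$-simplex of ``angle'' parameters (for instance, the conjugacy class of $AB$, which varies over a $2$-simplex) with generic fibre the $\mathbb{P}^1\cong S^2$ of remaining positions, the fibration over the boundary collapsing into a closed M\"obius band because the two eigenlines inside the degenerating plane may be exchanged. In both cases the closure of the irreducible locus is the whole bi-orbit space: conjugacy classes of the compact group $\SU(3)$ are closed, so the closure stays within the fixed-eigenvalue stratum, and the extra boundary is reducible, lying in the lower strata. The component counts follow by enumerating the discrete data: for each of the three values of $\lambda$ there are (after the usual inclusion-exclusion) roughly $\tfrac16(n-1)(n-2)$ regular conjugacy classes for $A$ and $n-1$ with a doubled eigenvalue, similarly for $B$; multiplying and summing over the three $\lambda$ gives $\tfrac12(n-1)(m-1)(n+m-4)$ components in the mixed case and $\tfrac1{12}(n-1)(n-2)(m-1)(m-2)$ in the both-regular case.

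\emph{The stratum $\cY_{(2,1)}$.} A representation $\rho=\rho_2\oplus\rho_1$ is determined by the irreducible $\U(2)$-representation $\rho_2$ together with $\rho_1=(\det\rho_2)^{-1}$, so this stratum is precisely the variety of irreducible $\U(2)$-representations of $\G$ studied in Section~\ref{sec:u2-char}. Here $\det\rho_2$ ranges freely over $\Hom(\G^{\mathrm{ab}},\U(1))=S^1$, fixing the scalar $\rho_2(z)$, whose $n$-th and $m$-th roots supply --- up to a finite ``gap'' choice --- the eigenvalues of $\rho_2(x)$ and $\rho_2(y)$, while the single remaining modulus is the angle in $\CC^2$ between their eigenlines, running over an open interval. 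Thus each piece is $S^1\times(0,1)$, a cylinder, unless going once around the circle parameter interchanges the two eigenvalue sheets --- which a direct check shows happens exactly at the half-gap $\tfrac n2$, hence only when $n$ is even, producing an open M\"obius band. Counting admissible gaps yields $\lfloor\tfrac12(n-1)\rfloor\cdot\tfrac12(m-1)$ cylinders and, for $n$ even, $\tfrac12(m-1)$ M\"obius bands, and closing the interval adds the two boundary circles --- along which $\rho_2$, hence $\rho$, becomes totally reducible --- giving closed cylinders and closed M\"obius bands meeting $\cY_{(1,1,1)}$ there.

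\emph{Main obstacle.} The hard part is the explicit topological identification of the two bi-orbit spaces in the $\cY_{(3)}$ analysis --- in particular showing that $T\backslash\SU(3)/T$ is genuinely an $S^2$-bundle over the open $2$-simplex that degenerates to a M\"obius band over the edges, which requires a careful study of the $T\x T$-action on $\SU(3)$ and of the resulting $\ZZ/2$ monodromy --- together with the combinatorial bookkeeping of connected components, where the delicate points are deciding when two eigenvalue configurations give representations in the same component and locating the M\"obius twists, which is what forces the floor function and the even-$n$ dichotomy.
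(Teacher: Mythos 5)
Most of your outline matches the paper's strategy: semi-simplicity (Proposition \ref{prop:semi-simple}) plus Schur's lemma forces $A^n=B^m=\varpi\Id$ with $\varpi\in\bm{\mu}_3$ on each irreducible block; the eigenvalue data is discrete and indexes the components; each component is a bi-orbit space of the form ``$\Stab(A)\backslash \U(3)/\Stab(B)$''; and the counts $\frac1{12}(n-1)(n-2)(m-1)(m-2)$, $\frac12(n-1)(m-1)(n+m-4)$, $\lfloor\frac12(n-1)\rfloor\frac12(m-1)$, $\frac12(m-1)$ all come out the same way. Your treatments of $\cY_{(1,1,1)}$ (via $\Hom(\G^{\mathrm{ab}},T)/W$, equivalent to the paper's $\Sym^3(S^1)$ fibration), of $\cY_{(2,1)}$ (via $\tilde\cY^\ast_2$, with the even-$n$ M\"obius twist), and of the case $(3b)$ orthant are essentially the paper's.

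However, there is a real gap in your treatment of the maximal case $(3a)$. You propose to fibre the bi-orbit space $T\backslash\SU(3)/T$ over ``the conjugacy class of $AB$''. For fixed regular conjugacy classes of $A$ and $B$ in $\SU(3)$, the image of $(A,B)\mapsto[AB]$ in the Weyl alcove is the Agnihotri--Woodward/Horn polytope, which for generic regular data is a hexagon, not a triangle, and the fibre structure over it does not match what the theorem asserts. The paper's parametrization is different and more elementary: writing $B=P\,\diag(\nu_i)\,P^{-1}$ in the eigenbasis of $A$, the first column $p_1=(p_{11},p_{21},p_{31})$ of the transition matrix $P$, after using the row $(S^1)^3$-action, gives a point $(b_1,b_2,b_3)$ in the closed orthant $B^2=\{b_i\geq 0,\ \sum b_i^2=1\}$ (Theorem \ref{thm:orthant}); over the interior the residual freedom in the second column of $P$ sweeps out $\CC P^1\cong S^2$, yielding the triangle base with $S^2$ fibres. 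Your proposal would need to replace the $[AB]$ map by this (or an equivalent) map to obtain the stated triangle.

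Relatedly, the M\"obius band statement over $\partial B^2$ is not a consequence of the informal remark ``the two eigenlines in the degenerating plane may be exchanged''; one must actually trace which of the nine $(2,1)$-loci $p_i=e_j$ lie over each edge, observe that the six totally reducible vertices and these segments form a single closed curve double-covering the boundary triangle, and conclude from the classification of connected degree-$2$ covers of $S^1$. You flag this as the main obstacle, correctly, but as written it is not established, and the $[AB]$-based picture would make it harder rather than easier to see.

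Finally, a small caution in the $\cY_{(2,1)}$ step: ``$\det\rho_2$ ranges freely over $\Hom(\G^{\mathrm{ab}},\U(1))=S^1$'' conflates the determinant character with the continuous parameter $t\in S^1$ of Section \ref{sec:u2-char}; once the discrete data $(\epsilon,\varepsilon)$ is fixed, $\det\rho_2$ is determined by $t$, but the map $t\mapsto\det\rho_2$ is a nontrivial cover of $S^1$. This does not affect the count, but the phrasing obscures exactly the double-cover phenomenon that produces the M\"obius components for even $n$.
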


Furthermore, in Section \ref{sec:intersection-tr} we provide an explicit description of how these strata intersect to form the CW-complex structure of $X(\G, \SU(3))$. From this information it is possible to extract some homological invariants, as done in Section \ref{sec:homological-invariants}, which give rise to the following result.

 \begin{thm*}
Let $\Gamma$ be the $(n,m)$-torus knot group with $n$ and $m$ coprime. The $\SU(3)$-character variety of $\G$ satisfies the following:
\begin{enumerate}
	\item For $n,m$ odd, the Euler characteristic with compact support of $X(\G, \SU(3))$ is
	$$
		\chi_c(X(\G, \SU(3))) = 1 + (n-1)(m-1) \left( \frac{n+ m - 4}{2} + \frac{5(n-2)(m-2)}{12} \right).
	$$
	\item For $n = 2$ and odd $m$, the homology of $X(\G, \SU(3))$ is
	$$
		H_k(X(\G, \SU(3))) = \left\{ \begin{array}{ll} 
		\ZZ & \textrm{if } k = 0, \\ 
		0 & \textrm{if } k = 1, \\ 
		\ZZ^{\frac{1}{2}(m-1)(m-2)} & 
		\textrm{if } k = 2, \\ 0  & \textrm{if } k \geq 3.
		\end{array} \right.
	$$
\end{enumerate}
\end{thm*}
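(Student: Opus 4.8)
The plan is to deduce both statements from the explicit stratification theorem quoted above, using the CW-complex structure of $X(\G, \SU(3))$ built from the strata closures $\overline{\cY}_{(1,1,1)}$, $\overline{\cY}_{(3)}$, $\overline{\cY}_{(2,1)}$ together with the gluing data described in Section \ref{sec:intersection-tr}. Since the compactly supported Euler characteristic is additive over the locally closed stratification, I would first compute $\chi_c$ of each open stratum separately: $\chi_c(\cY_{(1,1,1)}) = 1$ because the closed triangle is contractible; $\chi_c(\cY_{(2,1)})$ by taking the closed cylinders and M\"obius bands from the theorem and removing their intersections with the lower strata (a cylinder $S^1 \times [0,1]$ has $\chi_c = 0$, a M\"obius band has $\chi_c = 0$, so the contribution comes entirely from the boundary circles that get identified with pieces of $\cY_{(1,1,1)}$ and must be subtracted back out — here I would need the precise incidence data); and $\chi_c(\cY_{(3)})$ from the two families in $\overline{\cY}_{(3)}$, the $\frac12(n-1)(m-1)(n+m-4)$ triangles contributing $1$ each after deleting boundary strata, and the $\frac1{12}(n-1)(n-2)(m-1)(m-2)$ $S^2$-fibrations contributing $\chi_c(S^2) = 2$ over the open triangle. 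Summing these with signs and collecting terms should produce exactly $1 + (n-1)(m-1)\left(\frac{n+m-4}{2} + \frac{5(n-2)(m-2)}{12}\right)$; the coefficient $5 = 2 + 3$ or similar will be the bookkeeping checkpoint.

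For part (2), with $n = 2$ and $m$ odd, the combinatorial count collapses dramatically: $\frac12(n-1)(m-1)(n+m-4) = \frac12(m-1)(m-2)$ triangles in the first part of $\overline{\cY}_{(3)}$, the second family of $\overline{\cY}_{(3)}$ is empty since $(n-1)(n-2) = 0$, and $\overline{\cY}_{(2,1)}$ consists of $\frac12(m-1)$ closed M\"obius bands (the cylinder count $\lfloor\frac12(n-1)\rfloor \frac12(m-1) = 0$). So $X(\G,\SU(3))$ is assembled from one $2$-dimensional triangle $\cY_{(1,1,1)}$, a pile of closed triangles, and some M\"obius bands, glued along edges and circles lying in $\cY_{(1,1,1)}$ and along $\cY_{(2,1)} \cap \overline{\cY}_{(3)}$. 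I would build an explicit CW structure from this, compute the cellular chain complex, and read off $H_*$. The claim is that the result is homotopy equivalent to a wedge of $\frac12(m-1)(m-2)$ two-spheres: heuristically, each triangle in $\overline{\cY}_{(3)}$ whose entire boundary has been collapsed into the (contractible) lower strata becomes a $2$-sphere, the M\"obius bands retract onto their core circles which are absorbed into the contractible part, and nothing survives in degree $1$ because the ambient gluing is along simply connected or collapsible pieces. The count $\frac12(m-1)(m-2)$ of $2$-cycles should match the number of such triangles, i.e. the first factor in part (1) with $n=2$.

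The main obstacle I anticipate is not the Euler characteristic — that is mechanical once the strata $\chi_c$ are in hand — but rather verifying in part (2) that $H_1 = 0$ and that $H_2$ is free of exactly the claimed rank. This requires controlling how the edges of the triangles and the boundary circles of the M\"obius bands sit inside $\cY_{(1,1,1)}$ and inside $\overline{\cY}_{(2,1)}$: one must check that the $1$-skeleton contributed by these gluings carries no homology that fails to bound, i.e. that every $1$-cycle is killed either by a triangle face or because it lies in the contractible triangle $\cY_{(1,1,1)}$, and dually that the only relations among the $2$-cells are the trivial ones. A M\"obius band glued to the rest of the complex along its boundary circle is the delicate case, since its boundary circle is homotopically twice the core; I would need the explicit description from Section \ref{sec:intersection-tr} of which arc of $\partial(\text{M\"obius band})$ maps to which edge of which triangle and of $\cY_{(1,1,1)}$, and then run the cellular boundary computation carefully. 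A secondary check is orientability and torsion: the presence of M\"obius bands raises the possibility of $\ZZ/2$ summands in $H_1$, and part of the argument must be to show these are killed by the gluing, consistent with the stated torsion-free answer.
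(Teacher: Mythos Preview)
Your approach to Part (1) is essentially the paper's: additivity of $\chi_c$ over the open strata, with the (2,1)-stratum contributing zero and the (3b)-triangles contributing $1$ each. The one place you are vague is the ``$5 = 2+3$'' for each (3a)-component: the $2$ comes from the $S^2$-fibration over the open orthant as you say, but the $3$ comes from irreducible representations sitting over the \emph{edges} of the orthant. Over each open edge the irreducible fibre is an open interval, and in the closed M\"obius band over $\partial B$ the irreducible locus is the band minus its boundary circle minus three transverse arcs, giving $\chi_c = 0 - 0 - 3(-1) = 3$. This is not hard but it is not ``boundary strata get subtracted back out'' bookkeeping; you need the description of the fibre over the edges.

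Part (2) has a genuine gap. Your heuristic ``each triangle's boundary is collapsed into the contractible lower strata, the M\"obius bands retract onto cores absorbed into the contractible part'' is not what happens. The boundary circles of the $N_1 = \tfrac12(m-1)$ M\"obius bands lie in the contractible triangle $\cY_{(1,1,1)}$, so collapsing $\cY_{(1,1,1)}$ turns each band into an $\RR P^2$, and $X := \cY_{(1,1,1)} \cup \overline{\cY}_{(2,1)}$ has $H_1(X) = \ZZ_2^{N_1}$. The edges of the $N_2 = \tfrac12(m-1)(m-2)$ (3b)-triangles are \emph{not} glued into $\cY_{(1,1,1)}$; each edge is glued to a \emph{meridian} (core circle) of one of the M\"obius bands. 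So the Mayer-Vietoris connecting map is
\[
f: H_1(X\cap Y) = \ZZ^{N_2} \longrightarrow H_1(X) = \ZZ_2^{N_1},
\]
sending the boundary of the triangle with $B$-eigenvalue ratios $\epsilon_a,\epsilon_b,\epsilon_{a-b}$ to $[\ell_a]+[\ell_b]+[\ell_{a-b}] \pmod 2$, where $\ell_k$ is the meridian of the $k$-th band. The whole computation hinges on showing $f$ is surjective; only then do you get $H_1(\cY_3)=0$ and $H_2(\cY_3)=\ker f \cong \ZZ^{N_2}$. Surjectivity is a small but nontrivial combinatorial fact: taking $b=a$ gives $f(\partial \bm{T}_{\nu,\epsilon_{2a},\epsilon_a}) = [\ell_{2a}]$, and since $m$ is odd every residue is of the form $2a$. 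You correctly flagged the $\ZZ_2$ torsion as the delicate point, but the mechanism that kills it is this surjectivity, not any contractibility of the gluing locus.
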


From this result and the ones of \cite{munozporti2016}, we get that the homologies of the $\SU(3)$ 
and $\SL(3,\CC)$-character varieties agree for $n=2$, $m$ odd.
Moreover, we have the following result.

 \begin{thm*}
Let $\Gamma$ be the $(2,m)$-torus knot group with $m$ odd. The inclusion
$$
	X(\G, \SU(3)) \hookrightarrow X(\G, \SL(3, \CC))
$$
is a homotopy equivalence.
\end{thm*}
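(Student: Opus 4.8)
The plan is to leverage the previous two theorems: for $\Gamma$ the $(2,m)$-torus knot group with $m$ odd, the integral homology of $X(\Gamma,\SU(3))$ has been computed, and by the results of \cite{munozporti2016} together with the remark above, the homology of $X(\Gamma,\SL(3,\CC))$ agrees with it. So both spaces have the homology of a wedge of $\frac12(m-1)(m-2)$ copies of $S^2$. Since we want a homotopy equivalence and not just a homology isomorphism, the first step is to upgrade this to an isomorphism on all homotopy groups, or more economically to exhibit an explicit map inducing the homology isomorphism and then invoke a Whitehead-type argument.

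First I would establish that both spaces are simply connected. For $X(\Gamma,\SU(3))$, this should follow from the explicit CW-structure described in Section~\ref{sec:intersection-tr}: the $1$-skeleton and the attaching maps of the $2$-cells are known explicitly, and the computation $H_1 = 0$ together with a direct check that the fundamental group is abelian (e.g.\ the $1$-skeleton retracts appropriately, or van Kampen on the stratum decomposition $\cY_{(1,1,1)} \sqcup \cY_{(3)} \sqcup \cY_{(2,1)}$, each piece being simply connected or a circle bundle whose total effect kills $\pi_1$) gives $\pi_1 = 0$. For $X(\Gamma,\SL(3,\CC))$ one argues similarly from the description in \cite{munozporti2016}, where the irreducible locus is a union of pieces fibering over $\CC$ or $\CC^*$-type bases with simply connected fibers, and the reducible locus is contractible; alternatively, since $\SL(3,\CC)$ deformation retracts onto $\SU(3)$ and the inclusion $R(\Gamma,\SU(3))\hookrightarrow R(\Gamma,\SL(3,\CC))$ is compatible with the quotients, one may hope to transfer connectivity directly.

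Next I would construct the comparison map. The natural candidate is simply the inclusion $\iota: X(\Gamma,\SU(3)) \hookrightarrow X(\Gamma,\SL(3,\CC))$ from the statement, which exists because $\SU(3)$-representations are semisimple (Section~\ref{sec:rep-var}) hence map to polystable points, and the GIT quotient identifies semisimple $\SL(3,\CC)$-orbits; one must check $\iota$ is well-defined and continuous, which is standard. Then the key claim is that $\iota_*$ is an isomorphism on $H_2$ (all other homology being trivial). Rather than chase this abstractly, I would match the two stratifications: the stratum $\cY_{(3)}^{\SU}$ of irreducible $\SU(3)$-characters sits inside the irreducible $\SL(3,\CC)$-locus, and for $n=2$ the explicit pieces (the $\frac12(m-1)(m-2)$ contributions, coming from the two families in the $\overline{\cY}_{(3)}$ description specialized to $n=2$ — note the first family is empty since $n+m-4$ is multiplied by $(n-1)=1$ but $(n-2)=0$ kills... actually at $n=2$ only the cylinder/M\"obius contributions and the $S^2$-fibered pieces survive) deformation retract onto the corresponding $\SL(3,\CC)$ pieces, because each $\SL(3,\CC)$-piece fibers over a contractible base (a copy of $\CC$ or a disc) with the $\SU(3)$-piece a deformation retract of the fiber. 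Assembling these fiberwise retracts compatibly over the strata, and checking they glue along the boundary strata $\cY_{(2,1)}$ and $\cY_{(1,1,1)}$ (which are themselves contractible or retract correctly), produces a deformation retraction of $X(\Gamma,\SL(3,\CC))$ onto $\iota(X(\Gamma,\SU(3)))$.

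The main obstacle will be the gluing: the stratumwise deformation retractions are easy on each open stratum, but one must ensure they extend continuously across the incidence of strata — precisely the data encoded in ``how the closure of each stratum meets lower strata'' from the Theorem above and Section~\ref{sec:intersection-tr}. Concretely, near a point of $\cY_{(2,1)}$ one has a local model for how the irreducible locus degenerates, and the $\SU(3)$ versus $\SL(3,\CC)$ pictures must be compared in that local model (this is where the M\"obius-band versus cylinder subtlety and the $S^2$-fiber-over-the-triangle-boundary behavior enter). If a clean global retraction proves awkward to write down, the fallback is: having shown both spaces are simply connected CW-complexes with homology concentrated in degree $2$ and free there, both are homotopy equivalent to $\bigvee S^2$; then it suffices to show $\iota_*: H_2(X(\Gamma,\SU(3))) \to H_2(X(\Gamma,\SL(3,\CC)))$ is an isomorphism, which follows by checking it sends the explicit $2$-cycles (the $S^2$-fibers and triangle classes described above) to a basis — a finite, if tedious, verification — and then Whitehead's theorem finishes the argument.
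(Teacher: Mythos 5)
Your fallback argument is essentially the paper's proof, but your primary approach has a fatal flaw and there is a factual confusion about the stratification that would derail the intended fiberwise-retraction construction.

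The primary plan of assembling stratumwise deformation retractions cannot work, and the paper says so explicitly: even for $n=2$ the strata of type $(2,1)$ are not homotopy equivalent to their $\SL(3,\CC)$ counterparts, since $H_2(\bm{M}) = 0$ while $H_2(\CC^2 - \{y=0\} - \{y=x^2\}) \neq 0$ by Mayer--Vietoris. The paper emphasizes that the homotopy equivalence \emph{does not} preserve the stratification by type of representation. So there is no local model near $\cY_{(2,1)}$ in which the $\SU(3)$ picture is a deformation retract of the $\SL(3,\CC)$ picture; the ``gluing'' obstacle you flag is not a technical nuisance but a genuine obstruction, and the fiberwise program should be abandoned from the start. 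Separately, your description of which irreducible strata survive at $n=2$ is backwards: the $S^2$-fibered components (case (3a)) are the ones counted by $\frac{1}{12}(n-1)(n-2)(m-1)(m-2)$, which vanishes at $n=2$, while the $\frac{1}{2}(n-1)(m-1)(n+m-4) = \frac{1}{2}(m-1)(m-2)$ triangles (case (3b)) are exactly what remains. This matters because the $2$-cycles generating $H_2$ come from these triangles, not from any $S^2$-fibers.

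Your fallback route — show both spaces are simply connected with free homology concentrated in degree $2$, show $\iota_*$ is an isomorphism on $H_2$, and apply Whitehead — is correct and is what the paper does. The part you leave vaguest is the real content: the paper establishes the $H_2$-isomorphism by building parallel Mayer--Vietoris decompositions $X \cup Y$ of $\cY_3$ and $\hat X \cup \hat Y$ of $\cX_3$ and checking that $\iota$ restricts to homology isomorphisms on $X \cap Y \to \hat X \cap \hat Y$ and $X \to \hat X$ (the latter requiring a small argument about the eigenvalue parameter doubling the winding), then comparing the long exact sequences. Your alternative suggestion that simple connectivity of $\cX_3$ might ``transfer directly'' from the retraction $\SL(3,\CC) \simeq \SU(3)$ is not sound: the GIT quotient does not commute with the retraction at the level of quotients, and the paper instead proves simple connectivity of $\cX_3$ by a separate Seifert--van Kampen argument using the same cover $\hat X$, $\hat Y$.
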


This is an indication that knot groups $\G=\G_{m,n}$ are probably flawless, that is 
the inclusion $X(\G, K) \hookrightarrow X(\G, G)$ of the associated character variety 
to the maximal compact subgroup $K \subset G$ is a deformation retract, for 
any $G$.

\subsection*{Acknowledgements}

The first author has been partially supported by Comunidad de Madrid R+D Project PID/27-29 and Ministerio de Ciencia e Innovaci\'on Project PID2021-124440NB-I00 (Spain), and the third author has been partially supported by Ministerio de Ciencia e Innovaci\'on Project PID2020-118452GB-I00 (Spain).

\section{$\SU(r)$-representation varieties of torus knots}\label{sec:rep-var}

In this section, we discuss some properties of the $\U(r)$ and 
$\SU(r)$-character varieties of torus knots. Recall that, given coprime natural numbers $n, m$, the $(n,m)$-torus knot group is the group
$$
	\G_{n,m} = \langle a,b \,\left| \, a^n = b^m \right. \rangle.
$$
This group arises in low dimensional topology as the fundamental group of the knot complement $\RR^3-K$, where $K$ is the so-called $(n,m)$-torus knot that gives $n$ turns around the meridian and $m$ turns around the parallel of the naturally embedded torus $S^1 \times S^1 \subset \RR^3$. In other words, $K$ is the image in $\RR^3$ of the skew line in the square representation of the torus with rational slope $n/m$.

In this way, the associated $G$-representation variety is
$$
	R(\G_{n,m}, G) = \{(A,B) \in G \left| \, A^n = B^m \right. \}.
$$
These varieties have been previously studied in \cite{gonzalez2020motive,martinezmun2015su2,munoz2009sl,munozporti2016}, among others.

If $G = \GL(r, \CC), \SL(r, \CC), \U(r)$ or $\SU(r)$, a representation $\rho$ is \textit{reducible} if there exists some proper linear
subspace $W\subset \CC^r$ such that for all $\g \in \G$ we have 
$\rho(\g)(W)\subset W$; otherwise $\rho$ is
\textit{irreducible}. 
If $\rho$ is reducible, then there is a flag of subspaces $0=W_0\subsetneq W_1\subsetneq \ldots \subsetneq W_r=\CC^r$
such that $\rho$ leaves $W_i$ invariant, and it induces an irreducible representation $\rho_i$ in the quotient
$V_i=W_i/W_{i-1}$, $i=1,\ldots,r$. Then $\rho$ and $\hat \rho=\bigoplus \rho_i$
define the same point in the quotient $X(\G,G)$. We say that $\hat\rho$ is a semi-simple
representation, and that $\rho$ and $\hat\rho$ are 
S-equivalent. The space $X(\G,G)$ parametrizes semi-simple representations
\cite[Thm.~ 1.28]{lubotzky1985}. 

Now, let us suppose that $G = \U(r)$ and let $(A,B) \in R(\G_{n,m}, G)$. In that case, both $A$ and $B$ are diagonalizable in an orthonormal basis. 
So there exists $Q \in \U(r)$ such that $Q^{-1}AQ$ is diagonal, and the representation $(A,B)$ is equivalent to a representation of the form
\begin{equation}\label{eq:standard-form}
	\left(A=\begin{pmatrix}\lambda_1 & 0 & \ldots & 0 \\ 0 & \lambda_2 & \ldots & 0 \\ \vdots &  & \ddots & \vdots \\ 0 & 0 &\ldots &   
	\lambda_r \end{pmatrix},  
	B=\begin{pmatrix}b_{11} & b_{12} & \ldots & b_{1r} \\ b_{21} & b_{22} & \ldots & b_{2r} \\ \vdots &  & \ddots & \vdots \\ b_{r1} & b_{r2} &
	\ldots &  b_{rr} \end{pmatrix} \right).
\end{equation}
Here, the eigenvalues $\lambda_1, \ldots, \lambda_r \in S^1$ and the column vectors of $B$, namely
$b_1 = (b_{11}, b_{21}, \ldots, b_{r1}), \ldots$, $b_r = (b_{1r}, b_{2r}, \ldots, b_{rr})$ form an orthonormal basis of 
$\CC^r$. Note that this standard form of the representation is not unique. 

In this way, the representation $(A,B)$ is reducible if and only if it has a standard form (\ref{eq:standard-form}) such that all $b_{ij} = 0$ for $s < i \leq r$ and $1 \leq j \leq r-s$ for some $1 < s < r$, i.e.\ if the $s \times (r-s)$ left-bottom block of $B$ vanishes. However, the fact that $A, B \in \U(r)$ allows us to get a stronger result.

\begin{prop}\label{prop:semi-simple}
Every $\U(r)$-representation is semi-simple.
\end{prop}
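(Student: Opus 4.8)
The plan is to show that if a $\U(r)$-representation $(A,B)$ is reducible, then the invariant subspace has an invariant orthogonal complement, so one can split off the subrepresentation and proceed by induction on $r$. The key observation is that the group $\G_{n,m}$ is generated by the two unitary matrices $A$ and $B$, so leaving a subspace invariant under the representation is the same as leaving it invariant under both $A$ and $B$ simultaneously.

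First I would recall the elementary linear-algebra fact that if $U \in \U(r)$ and $W \subset \CC^r$ is a $U$-invariant subspace, then $W^{\perp}$ is also $U$-invariant: indeed, for $w \in W$ and $v \in W^\perp$ we have $\langle U v, w\rangle = \langle v, U^{-1} w\rangle = \langle v, U^* w\rangle = 0$, using that $U^* = U^{-1}$ preserves $W$ (since $U$ is an automorphism of the finite-dimensional space $W$, so is $U^{-1} = U^*$). Then, given a reducible representation $\rho = (A,B)$ with proper invariant subspace $0 \neq W \subsetneq \CC^r$, both $A$ and $B$ leave $W$ invariant, hence by the above both $A$ and $B$ leave $W^\perp$ invariant. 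Therefore $\CC^r = W \oplus W^\perp$ as a $\G_{n,m}$-module, with each summand a genuine subrepresentation valued in the corresponding unitary group $\U(W) \cong \U(\dim W)$ and $\U(W^\perp) \cong \U(r - \dim W)$ (the restricted operators are still unitary with respect to the induced inner products).

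Next I would set up the induction on $r$: the base case $r = 1$ is trivial since every $1$-dimensional representation is irreducible, hence semi-simple. For the inductive step, either $\rho$ is irreducible — and then it is semi-simple by definition — or it is reducible, in which case the decomposition $\rho = \rho|_W \oplus \rho|_{W^\perp}$ into lower-dimensional $\U(\dim W)$- and $\U(r-\dim W)$-representations lets us apply the inductive hypothesis to each summand, writing each as a direct sum of irreducibles; concatenating these gives a decomposition of $\rho$ as a direct sum of irreducible subrepresentations, which is precisely the assertion that $\rho$ is semi-simple.

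I do not anticipate a serious obstacle here: the whole argument rests on the orthogonal-complement trick, which is exactly the feature that distinguishes $\U(r)$ from $\GL(r,\CC)$, where invariant subspaces need not be complemented by invariant subspaces. The only point requiring a little care is to confirm that the specific presentation plays no role — the statement holds for \emph{any} finitely generated $\G$, since semi-simplicity only depends on the image subgroup $\langle \rho(\g_1), \ldots, \rho(\g_k)\rangle \subset \U(r)$ and the same orthogonal-complement argument applies verbatim to an arbitrary set of unitary generators. One could also phrase this more slickly via Weyl's unitary trick or complete reducibility of representations of compact groups, but the direct inductive argument above is self-contained and elementary.
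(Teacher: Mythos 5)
Your proof is correct and rests on exactly the same mechanism as the paper's: unitarity forces the orthogonal complement of an invariant subspace to be invariant. The difference is essentially presentational. The paper works in coordinates: it takes the standard form (\ref{eq:standard-form}) with $A$ diagonal, supposes the lower-left block of $B$ vanishes, and uses orthonormality of the columns of $B$ to conclude that the upper-right block also vanishes, giving one orthogonal splitting of the representation. You phrase the same orthogonality fact abstractly, as a lemma about a $\U(r)$-invariant subspace $W$ and its complement $W^\perp$, and then spell out the induction on $r$ needed to arrive at a direct sum of irreducibles --- a step the paper's one-shot block computation leaves implicit. Your version also makes transparent that nothing is special about the torus-knot presentation: only the image subgroup in $\U(r)$ matters, so the statement holds for any finitely generated $\G$. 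For contrast, the paper's Remark records a genuinely different proof (compactness of $\U(r)$ implies closed orbits, so the semi-simple representation in the orbit closure already lies in the orbit), which is distinct from both your argument and the appeal to Weyl's unitary trick you mention in passing.
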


\begin{proof}
Let $(A,B)$ be the standard form (\ref{eq:standard-form}) of the representation with its $s \times (r-s)$ left-bottom block being zero. This means that $\langle b_1, \ldots, b_{r-s}\rangle = \langle e_1, \ldots, e_{r-s}\rangle$. Hence, since $b_{j} \in \langle b_1, \ldots, b_{r-s}\rangle^\perp =  \langle e_1, \ldots, e_{r-s}\rangle^{\perp}$ for $j > r-s$, we have that $b_{ij} = 0$ for $i \leq r-s$. Therefore, also the $(r-s) \times s$ right-upper block vanishes and thus $(A,B)$ is semi-simple.
\end{proof}

\begin{rem}
An alternative proof of Proposition \ref{prop:semi-simple} is the following. Recall that the $\U(r)$-orbit of any representation contains in its closure a semi-simple representation. As $\U(r)$ is a compact group, its orbits are closed and thus the limit semi-simple representation actually lies in the orbit.
\end{rem}

We can diagonalize $B$ and write it as
 \begin{equation}\label{eq:standard-form-eigen}
  B = P\begin{pmatrix}\nu_1 & 0 & \ldots & 0 \\ 0 & \nu_2 & \ldots & 0 \\ \vdots &  & \ddots & \vdots \\ 0 & 0& \ldots & \nu_r \end{pmatrix}P^{-1}, 
  \qquad 
  P = \begin{pmatrix}p_{11} & p_{12} & \ldots & p_{1r} \\ p_{21} & p_{22} & \ldots & p_{2r} \\ \vdots &  & \ddots & \vdots \\ 
  p_{r1} & p_{r2} &\ldots  & p_{rr} \end{pmatrix},
\end{equation}
for a matrix $P$ whose column vectors $p_1 = (p_{11}, p_{21}, \ldots, p_{r1}), \ldots , p_r = (p_{1r}, p_{2r}, \ldots, p_{rr})$ form an 
orthonormal basis of $\CC^r$, namely an orthonormal basis of eigenvectors of $B$,  and some eigenvalues $\nu_1, \ldots, \nu_r \in S^1$. Observe that, by Proposition \ref{prop:semi-simple}, since a reducible representation is semi-simple, if $(A,B)$ is reducible then there must exist a choice of eigenvectors that gives a block structure in $P$. In other words, the representation $(A,B)$ is irreducible if and only if there exists no vanishing sub-minor in $P$
in \emph{any} possible expression as in (\ref{eq:standard-form-eigen}).

\begin{rem}
Completely analogous descriptions can be done in the $\SU(r)$ case. In this setting, any representation can also be put in the forms (\ref{eq:standard-form}) and (\ref{eq:standard-form-eigen}) but the eigevalues must additionally satisfy $\prod \lambda_i = 1$ and $\det(B) = 1$ (equivalently, $\prod \nu_i = 1$). Moreover, since the $\U(r)$-orbits are the same as the $\SU(r)$-orbits, Proposition \ref{prop:semi-simple} also holds for $\SU(r)$-representations.
\end{rem}

The aim of this paper is to compare the $\GL(r, \CC)$ and $\SL(r, \CC)$-character varieties with their counterparts for their maximal compact subgroups $\U(r)$ and $\SU(r)$. To shorten notation, we shall denote
$$
\begin{array}{lll}
\mathcal{X}_{r}  =X(\Gamma_{m,n},\SL(r,\mathbb{C})), &&  \tilde{\mathcal{X}}_{r}=X(\Gamma_{m,n},\GL(r,\mathbb{C})), \\
\mathcal{Y}_{r}  =X(\Gamma_{m,n},\SU(r)),  && \tilde{\mathcal{Y}}_{r}=X(\Gamma_{m,n},\U(r)).
\end{array}
$$

Observe that the eigenvalues of $A$ and $B$ induce maps
\begin{equation}\label{eq:eigen-maps}
	H: \cY_r \to \Sym^r(S^1) \times \Sym^r(S^1), \qquad \tilde{H}: \tilde{\cY}_r \to \Sym^r(S^1) \times \Sym^r(S^1).
\end{equation}
Here, $\Sym^r(X)$ denotes the symmetric product $\Sym^r(X) = X^r/{\mathfrak S}_r$, where the symmetric group ${\mathfrak S}_r$ 
acts by permutation of the factors. These maps assign $\tilde{H}(A,B) = (\{\lambda_1, \ldots, \lambda_r\}, \{\nu_1, \ldots, \nu_r\})$ and analogously for $H$. 
However, it is not a fibration and, indeed, $\tilde{H}$ is not even surjective since, as $A^n = B^m$, we must have that $\{\lambda_1^n, \ldots, \lambda_r^n\} = \{\nu_1^m, \ldots, \nu_r^m\}$. However, we can stratify the base space $\Sym^r(S^1) \times \Sym^r(S^1)$ to get control of the maps $H$ and $\tilde{H}$.

\section{Stratification of the character variety}\label{sec:stratification}

Consider a partition $\pi=(r_1,\overset{(a_1)}{\ldots},r_1,\ldots,r_s,\overset{(a_s)}{\ldots},r_s)$ of $r$, that is, $r=a_1r_1+\ldots+a_sr_s$, where $r_1>r_2>\ldots>r_s>0$ and $a_i\geq 1$. 
We can consider the locally closed subvariety $\tilde{Y}_\pi \subset \tilde{\cY}_r$ of isomorphism classes of representations of the form
\begin{equation}
\rho=\bigoplus_{t=1}^{s}\bigoplus_{l=1}^{a_l} \rho_{tl},  \label{subrep}
\end{equation}
where $\rho_{tl}: \Gamma \rightarrow \U(r_t)$ is an irreducible representation. In particular, for $\pi=(1,\overset{(r)}{\ldots},1)$, $\tilde{Y}_{\pi}$ corresponds to the totally reducible representations, which we shall denote by $\tilde{\mathcal{Y}}^{\textup{TR}}_r$; whereas for $\pi=(r)$, $\tilde{Y}_{\pi}$ corresponds to the irreducible representations, which we shall denote by $\tilde{\mathcal{Y}}^{\ast}_r$.

By its very definition, for any partition $\pi=(r_1,\overset{(a_1)}{\ldots},r_1,\ldots,r_s,\overset{(a_s)}{\ldots},a_s)$ of $r$, we have an isomorphism
\begin{equation}\label{eq:partition-product}
	\tilde{Y}_{\pi}=\prod_{t=1}^{s} \Sym^{a_t} \tilde{\mathcal{Y}}^{\ast}_{r_t}.
\end{equation}
Moreover, by Proposition \ref{prop:semi-simple}, every $\U(r)$-representation is semi-simple, so it can be written in the form (\ref{subrep}) for some partition $\pi$. Hence, we have a natural stratification
$$
\tilde{\mathcal{Y}}_{r}=\bigsqcup_{\pi\in\Pi_r} \tilde{Y}_{\pi} = \bigsqcup_{\pi\in\Pi_r} \prod_{t=1}^{s} \Sym^{a_t} \tilde{\mathcal{Y}}^{\ast}_{r_t},
$$
where $\Pi_r$ is the set of all partitions of $r$. 

A similar decomposition can be set for $\SU(r)$,
$$
\mathcal{Y}_{r}=\bigsqcup_{\pi\in\Pi_r} Y_{\pi},
$$
where $Y_\pi=\tilde{Y}_\pi\cap\mathcal{Y}_r$ are those representations from (\ref{subrep}) with $\prod_{t,l} \det(\rho_{tl})=1$. However, observe that in this setting we no longer have an analogous decomposition (\ref{eq:partition-product}).

As in the $\U(r)$-case, for $\pi=(1,\overset{(r)}{\ldots},1)$, we get $Y_{\pi} = {\mathcal{Y}}^{\textup{TR}}_r$,
the set of totally reducible representations, and for $\pi=(r)$, we get ${Y}_{\pi} = {\mathcal{Y}}^{\ast}_r$, the set of irreducible representations.


\subsection{The totally reducible locus}\label{sec:totally-reducible}

In this section, we shall study the stratum of totally reducible representations, corresponding to the partition $\pi=(1,\overset{(r)}{\ldots},1)$. As we shall show, this space is strongly related to symmetric products of circles.

\begin{lem}\label{lem:irred-dim1}
$\tilde{\mathcal{Y}}_1 = \tilde{\mathcal{Y}}_1^\ast \cong S^1$.
\end{lem}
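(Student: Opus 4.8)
The equality $\tilde{\mathcal{Y}}_1 = \tilde{\mathcal{Y}}_1^\ast$ is immediate from the definitions: a one-dimensional representation has no proper nonzero invariant subspace (there are none in $\CC^1$), so every element of $\tilde{\mathcal{Y}}_1$ is irreducible. Moreover, since $\U(1)$ is abelian the conjugation action on $R(\G_{n,m},\U(1))$ is trivial, so that, identifying $\U(1)$ with $S^1\subset\CC^\ast$,
$$
\tilde{\mathcal{Y}}_1 = X(\G_{n,m},\U(1)) = R(\G_{n,m},\U(1)) = \{(a,b)\in S^1\times S^1 \mid a^n=b^m\}.
$$
So the content of the lemma is to identify this closed subgroup of $(S^1)^2$ with $S^1$.

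To do this I would use that $n$ and $m$ are coprime. Fix integers $\alpha,\beta$ with $\alpha n+\beta m=1$ (B\'ezout), and define $\varphi\colon S^1\to R(\G_{n,m},\U(1))$ by $\varphi(t)=(t^m,t^n)$, which indeed satisfies $(t^m)^n=t^{mn}=(t^n)^m$, and $\psi\colon R(\G_{n,m},\U(1))\to S^1$ by $\psi(a,b)=a^\beta b^\alpha$. Then $\psi(\varphi(t))=t^{\beta m+\alpha n}=t$, while for $(a,b)$ with $a^n=b^m$ one computes $\varphi(\psi(a,b))=(a^{\beta m}b^{\alpha m},\,a^{\beta n}b^{\alpha n})=(a^{\beta m}a^{\alpha n},\,b^{\beta m}b^{\alpha n})=(a,b)$, using the relation $a^n=b^m$ to rewrite $b^{\alpha m}=a^{\alpha n}$ and $a^{\beta n}=b^{\beta m}$. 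Both $\varphi$ and $\psi$ are given by Laurent monomials, hence are morphisms of real algebraic groups, so $\varphi$ is an isomorphism $S^1\xrightarrow{\ \sim\ }\tilde{\mathcal{Y}}_1$; in particular a homeomorphism. (Equivalently, one may observe that an abelian representation factors through $\G_{n,m}^{\mathrm{ab}}=\ZZ^2/\la(n,-m)\ra\cong\ZZ$, the quotient being $\ZZ$ precisely because $(n,-m)$ is a primitive vector, whence $\tilde{\mathcal{Y}}_1=\Hom(\ZZ,\U(1))\cong\U(1)=S^1$.)

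There is essentially no serious obstacle here; the only point requiring care is that coprimality of $n$ and $m$ is used in two places — to produce the B\'ezout coefficients $\alpha,\beta$ (surjectivity of $\varphi$), and again for injectivity of $\varphi$, since $(t^m,t^n)=(s^m,s^n)$ forces $(t/s)^n=(t/s)^m=1$ and hence $t=s$ only because $\gcd(n,m)=1$. I would state this dependence explicitly, as coprimality is a standing hypothesis that will be used repeatedly in the sequel.
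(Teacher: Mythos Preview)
Your proof is correct and follows essentially the same approach as the paper: both identify $\tilde{\mathcal{Y}}_1$ with $\{(a,b)\in S^1\times S^1\mid a^n=b^m\}$ and exhibit the isomorphism via $t\mapsto(t^m,t^n)$, relying on $\gcd(n,m)=1$ for bijectivity. The only cosmetic difference is that the paper first establishes $\tilde{\mathcal{X}}_1\cong\CC^\ast$ and then restricts to the unit circle, whereas you work directly in $S^1$ and make the inverse explicit via B\'ezout coefficients.
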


\begin{proof}
Given $(\lambda,\nu)\in \tilde{\mathcal{X}}_1$, if $\lambda^n =\nu^n$ and $m,n$ are coprime, there exists a unique $t\in \mathbb{C}^{\ast}$ such that $\lambda=t^m, \nu=t^n$, so that $\tilde{\mathcal{X}}_1\cong \mathbb{C}^\ast$. If $\abs{\lambda}=\abs{\nu}=1$, 
then $\abs{t}=1$ and we get that $\tilde{\mathcal{Y}}_1 \cong S^1 \subset \tilde{\mathcal{X}}_1 \cong \mathbb{C}^{\ast}$.
\end{proof}

\begin{prop}\label{prop:totally-reducible-Ur}
We have an isomorphism
$$
	\tilde{\mathcal{Y}}^{\textup{TR}}_r \cong \Sym^r(S^1),
$$
where $\Sym^{r}(S^1)$ is a fiber bundle $\Sym^{r}(S^1) \to S^1$ with fiber the $(r-1)$-simplex
$$
	\Delta_{r-1} =
	 \left\{ (u_1,\ldots,u_{r})\in \mathbb{R}^{r} \, \big| \, u_i\geq 0, \quad \sum_{i=1}^{r} u_i = 1\right\},
%
$$
and monodromy given by the map $(u_1,\ldots,u_{r}) \mapsto (u_{r}, u_1,\ldots, u_{r-1})$.
\end{prop}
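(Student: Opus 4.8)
The plan is to first identify $\tilde{\mathcal{Y}}^{\textup{TR}}_r$ with $\Sym^r(S^1)$, and then separately analyze the structure of $\Sym^r(S^1)$ as a fiber bundle over $S^1$. For the first identification, recall that a totally reducible $\U(r)$-representation is $\rho = \bigoplus_{i=1}^r \rho_i$ with each $\rho_i: \Gamma \to \U(1) = S^1$ a one-dimensional representation, and by Lemma~\ref{lem:irred-dim1} the set of such $\rho_i$ is $\tilde{\mathcal{Y}}_1 \cong S^1$. Since the decomposition into one-dimensional pieces is unique up to reordering the summands (two totally reducible representations are $S$-equivalent exactly when they have the same multiset of characters), taking the unordered tuple $\{\rho_1, \ldots, \rho_r\}$ gives a bijection $\tilde{\mathcal{Y}}^{\textup{TR}}_r \cong \Sym^r(\tilde{\mathcal{Y}}_1) \cong \Sym^r(S^1)$; one should check this is a homeomorphism, which follows because the topology on the character variety is the quotient topology and $\Sym^r$ of a homeomorphism is a homeomorphism.

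Next I would establish the fiber bundle structure of $\Sym^r(S^1)$ over $S^1$. Write $S^1 = \RR/\ZZ$ and represent a point of $\Sym^r(S^1)$ by $r$ points $\theta_1, \ldots, \theta_r \in \RR/\ZZ$. The idea is to use the "barycenter" or "total" map: define $\sigma: \Sym^r(S^1) \to S^1$ by $\sigma(\{\theta_1,\ldots,\theta_r\}) = \theta_1 + \cdots + \theta_r \bmod 1$, which is well-defined on the symmetric product. Over a point $t \in S^1$, the fiber consists of multisets summing to $t$; after fixing a basepoint one can order the points cyclically and record the consecutive gaps $u_1, \ldots, u_r \geq 0$ with $\sum u_i = 1$, which is exactly $\Delta_{r-1}$. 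Concretely, lift to $\RR$: choosing the representatives, a fiber point is obtained from a gap vector, and going once around the base circle cyclically permutes which gap is "first", which produces the stated monodromy $(u_1,\ldots,u_r)\mapsto(u_r,u_1,\ldots,u_{r-1})$. One then verifies local triviality away from a neighborhood of any point, so that $\Sym^r(S^1)$ is the mapping torus of the cyclic shift on $\Delta_{r-1}$.

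The main obstacle, and the step deserving the most care, is making the fiber identification with $\Delta_{r-1}$ and the monodromy computation precise, because of the choices involved: ordering the points on $S^1$ cyclically, handling multisets with coincident points (which lie on faces of the simplex), and choosing the lift to $\RR$ consistently as the base point moves. The cleanest way is probably to exhibit an explicit homeomorphism $\Sym^r(S^1) \xrightarrow{\sim} (\Delta_{r-1} \times [0,1]) / {\sim}$, where $(u_1,\ldots,u_r,1) \sim (u_r,u_1,\ldots,u_{r-1},0)$, by sending a configuration to its ordered gap vector together with the position of a distinguished point; one checks this is well-defined, continuous, and bijective, hence a homeomorphism since both sides are compact Hausdorff. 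This is a classical fact (it is essentially the statement that $\Sym^r(S^1)$ is an $S^1$-bundle — in fact it deformation retracts onto a circle — realized as this explicit simplex bundle), so the argument is routine once the bookkeeping is set up carefully.
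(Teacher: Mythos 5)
Your proposal follows essentially the same route as the paper: identify $\tilde{\mathcal{Y}}^{\textup{TR}}_r$ with $\Sym^r(S^1)$ via the multiset of characters (equivalently, the paper's $(A,B)\mapsto\{t_1,\ldots,t_r\}$ with $A=\diag(t_i^m)$, $B=\diag(t_i^n)$), fiber over $S^1$ by the sum (your $\sigma$) which is the same as the paper's determinant map under $\theta\mapsto e^{2\pi i\theta}$, take consecutive gaps as simplex coordinates, and read off the monodromy as the cyclic shift. The one genuine piece of bookkeeping you flag but leave open --- how to single out a canonical ``first'' point so the gap vector is well defined, not just well defined up to cyclic permutation --- is exactly where the paper's proof does its work: lift the cyclically ordered points to $s_1\leq\cdots\leq s_r\leq s_1+1$ in $\RR$ and impose the normalization $\sum s_i=0$, which is achievable because $\prod t_i=1$ forces $\sum s_i\in\ZZ$ and both changing the designated first point and shifting the branch cut of the logarithm move the sum by integers; this normalization determines the first point uniquely, local triviality is shown by multiplying by $r$-th roots of $\diag(t,\ldots,t)$, and the monodromy formula $(u_1,\ldots,u_r)\mapsto(u_r,u_1,\ldots,u_{r-1})$ drops out of tracking how the normalized lift changes when the branch cut moves once around $S^1$.
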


\begin{proof}
We have $\tilde{\mathcal{Y}}_1\cong S^1$ by Lemma \ref{lem:irred-dim1} and  $\tilde{\mathcal{Y}}^{\textup{TR}}_r \cong \Sym^r(S^1)$ by (\ref{eq:partition-product}), where a totally reducible representation is given by a pair $(A,B)$, 
where $A=\diag(t_1^m,\ldots,t_r^m)$ and $B=\diag(t_1^n,\ldots,t_r^n)$, $\abs{t_i}=1$, 
and the isomorphism is given by $(A,B)\mapsto \lbrace t_1,\ldots,t_n\rbrace \in \Sym^r(S^1)$. In other words, there is a unique unitary diagonal matrix $D=\diag(t_1,\ldots,t_r)$ for any totally reducible pair $(A,B)$ that is simultaneously an $m$-th root for $A$ and an $n$-th root for $B$.

In order to describe $\Sym^r(S^1)$, we can consider the map $p:\Sym^r(S^1) \to S^1$ given by the determinant of $D$, 
$p(\lbrace t_1,\ldots,t_r \rbrace )=t_1\cdots t_r$. When the pair is a $\SU(r)$-representation so that $(A,B)\in {\mathcal{Y}}^{\textup{TR}}_r$, 
$\det(t_1\cdots t_r)=1$ so that $p^{-1}(1)={\mathcal{Y}}^{\textup{TR}}_r\subset \tilde{\mathcal{Y}}^{\textup{TR}}_r$.

We check now that ${\mathcal{Y}}^{\textup{TR}}_r$, the fiber over $1$, is an $(r-1)$-simplex.
Given $\lbrace t_1,\ldots,t_r\rbrace \in \Sym^r(S^1)$ such that $\prod t_i=1$, we can regard them as $n$ unordered points on the circle and choose an initial point $t_1$. After this choice, we can order the remaining points anticlockwise and also choose liftings $s_1,\ldots, s_r \in \mathbb{R}$ so that, via the exponential map, $e^{2\pi is_i}=t_i$ and $s_1\leq s_2 \leq 
\ldots \leq s_r$, using the logarithm with branch cut at $t_1$. 
If the multiplicity of the first point is greater than one, so that some of them are lifted in last position having gone once around the circle, the only consequence is that $s_r\leq s_1+1$.

 Since $\prod t_i =1$, we obtain that $\sum s_i\in \mathbb{Z}$. In fact, any integer can be obtained as $\sum s_i$, choosing appropiately the first point (choosing the second point as the first and placing the first as last increases the sum by 1, if we keep the branch cut fixed) and the logarithm (the total sum changes by multiples of $r$), so we can assume that $\sum s_i=0$. This determines uniquely the first point in the cyclic order, and hence uniquely
 determines the unordered set of points. Therefore
 the fiber $p^{-1}(1)$ is bijective to the set:
\begin{equation} \label{eq:redsimplex}
 A:= \lbrace (s_1,\ldots,s_r)\in \mathbb{R}^r \mid \sum s_i =0,  s_i\leq s_{i+1}, s_r \leq s_1+1 \rbrace.
\end{equation}
Define the map $\varphi:A \to Y_r^{\textup{TR}}$, given by 
 $$
  \varphi(s_1,\ldots,s_r)= \{ e^{2\pi i s_1}, \ldots, e^{2\pi i s_r} \} \in (S^1)^r/{\mathfrak S}_r\, .
  $$
This map is continuous, the first space is compact, and the second is Hausdorff, hence it is an homeomorphism with its image,
$\varphi(A)\cong Y_r^{\textup{TR}}$. 
Now take  $u_i=s_{i+1}-s_i$, $i=1,\ldots, r-1$, $u_r=s_1+1-s_r$. Therefore, we have the conditions $u_i \geq 0$ and 
$s_i=s_1+u_1+\ldots+u_{i-1}$, $i=1,\ldots, r$. The sum is 
 $0=\sum\limits_{i=1}^r s_i = r s_1+ (r-1)u_1+\ldots +u_{r-1}$, which uniquely determines $s_1= -\sum\limits_{i=1}^{r-1} \frac{r-i}r u_i$.

To check that $p$ defines a fibre bundle structure, note that $\Sym^r(S^1)$ can be trivialized over any $t\in S^1$ via the map 
$\phi: p^{-1}(t)\times (S^1- \lbrace -t \rbrace) \longrightarrow p^{-1}(S^1 -\lbrace -t \rbrace)$,
 $$
 \phi (\lbrace t_1,\ldots, t_r \rbrace, te^{i\theta}) = \lbrace e^{i\theta/r}t_1, \ldots, e^{i\theta/r}t_r \rbrace,
 $$
for $\theta\in (-\pi,\pi)$. In other words, the fibre over $1$ and the fibre over $t\in S^1$ can be identified by multiplying by a suitable $r$-th root of $\diag(t,\ldots,t)$, well defined after taking a branch cut for the logarithm. The gluing map for this fiber bundle at the branch cut increases the total sum of the liftings $s_i$ by $1$, which results in a coordinate transformation
$$
(s_1,\ldots,s_r)\mapsto \left(s_r-1+\frac1r,s_1+\frac1r,s_2+\frac1r,\ldots,s_{r-1}+\frac1r  \right),
$$
after normalizing so that $\sum s_i=0$. Therefore 
 $$
 (u_1,\ldots, u_{r}) \mapsto (u_{r}, u_1,\ldots, u_{r-1}),
 $$
since $s_1-s_r+1=1- u_1-\ldots-u_{r-1}$.
\end{proof}

The description of the last gluing map provides the following.

\begin{cor}
$\tilde{\mathcal{Y}}_r^{\textup{TR}}$ is orientable for odd $r$, non-orientable for even $r$. 
For $r=2$ it is a M\"obius band.
\end{cor}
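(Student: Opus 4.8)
The plan is to exploit the fiber bundle description $\tilde{\mathcal{Y}}_r^{\textup{TR}}\cong\Sym^r(S^1)$ of Proposition \ref{prop:totally-reducible-Ur}, which presents the total space as the mapping torus of the monodromy homeomorphism $\sigma\colon\Delta_{r-1}\to\Delta_{r-1}$, $\sigma(u_1,\ldots,u_r)=(u_r,u_1,\ldots,u_{r-1})$. Since $\Sym^r(S^1)$ is a compact manifold with boundary, it is orientable if and only if its interior is, and the interior is the mapping torus of $\sigma$ restricted to the open simplex $\mathring\Delta_{r-1}\cong\mathbb{R}^{r-1}$. A mapping torus of a homeomorphism of a connected oriented manifold is orientable precisely when the monodromy is orientation-preserving (the base $S^1$ being already oriented); so the whole question reduces to computing the sign of $\sigma$ on $\mathring\Delta_{r-1}$.

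To compute that sign I would realize $\sigma$ as the restriction of the linear cyclic permutation $\tilde\sigma$ of $\mathbb{R}^r$ that sends the $i$-th coordinate to the $(i{+}1)$-st cyclically; this $\tilde\sigma$ preserves the affine hyperplane $\{\sum u_i=1\}$ containing $\Delta_{r-1}$, so its restriction agrees with $\sigma$ up to translation, hence has the same sign. The matrix of $\tilde\sigma$ is the permutation matrix of an $r$-cycle, so $\det\tilde\sigma=(-1)^{r-1}$. Decomposing $\mathbb{R}^r=\mathbb{R}\cdot(1,\ldots,1)\oplus V_0$ with $V_0=\{\sum v_i=0\}$ the tangent space to the simplex, $\tilde\sigma$ fixes $(1,\ldots,1)$ and preserves $V_0$, whence $\det(\tilde\sigma|_{V_0})=\det\tilde\sigma=(-1)^{r-1}$. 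Therefore $\sigma$ preserves orientation for odd $r$ and reverses it for even $r$, giving orientability of $\tilde{\mathcal{Y}}_r^{\textup{TR}}$ exactly in the odd case.

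For $r=2$ the fiber is the $1$-simplex $\Delta_1\cong[0,1]$ and the monodromy $(u_1,u_2)\mapsto(u_2,u_1)$ is the reflection exchanging the two endpoints; the mapping torus of an interval by its flip is by definition a M\"obius band, so $\tilde{\mathcal{Y}}_2^{\textup{TR}}$ is a M\"obius band, consistent with the non-orientability just established.

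The only subtlety I anticipate — a minor one — is handling the manifold-with-boundary aspect carefully: $\sigma$ does not fix $\Delta_{r-1}$ pointwise and permutes its faces, so orientability must be read off from the interior (equivalently, from the first Stiefel--Whitney class, which is pulled back from the base $S^1$ and is detected by the sign of $\sigma$ on $V_0$). Once this reduction is in place the determinant computation is immediate, and no further estimates are needed.
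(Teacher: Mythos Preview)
Your proposal is correct and is precisely the argument the paper has in mind: the corollary is stated immediately after Proposition~\ref{prop:totally-reducible-Ur} with the single remark that ``the description of the last gluing map provides the following,'' and your computation of $\det(\sigma|_{V_0})=(-1)^{r-1}$ is the natural way to make that remark explicit. The $r=2$ case you treat is exactly the paper's intended reading as well.
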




From the proof of Proposition \ref{prop:totally-reducible-Ur}, that the fiber map $\Sym^r(S^1) \to S^1$ is exactly the determinant map. Hence, if we fix the determinant, we directly get the following description of the totally reducible $\SU(r)$-representations.

\begin{cor}
We have an isomorphism ${\mathcal{Y}}^{\textup{TR}}_r \cong \Delta_{r-1}$.
\end{cor}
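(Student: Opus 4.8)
The plan is to obtain the corollary as the direct specialization of Proposition \ref{prop:totally-reducible-Ur} to a single fiber of the determinant fibration. Recall that in the proof of that proposition we identified the determinant map $p : \tilde{\mathcal{Y}}_r^{\textup{TR}} \cong \Sym^r(S^1) \to S^1$ with $p(\{t_1,\ldots,t_r\}) = t_1\cdots t_r$, and we showed that the fiber $p^{-1}(1)$ is precisely the locus of totally reducible representations with trivial determinant, which by the definition of $\mathcal{Y}_r^{\textup{TR}}$ and the remark following Proposition \ref{prop:semi-simple} (the $\SU(r)$-constraint $\prod \lambda_i = \prod \nu_i = 1$ is equivalent to $\prod t_i = 1$) is exactly $\mathcal{Y}_r^{\textup{TR}}$.

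Concretely, I would argue as follows. First, note that a totally reducible $\SU(r)$-representation is, after the remark following the proof of Proposition \ref{prop:semi-simple}, given by a pair $(A,B)$ with $A = \diag(t_1^m,\ldots,t_r^m)$, $B = \diag(t_1^n,\ldots,t_r^n)$, $|t_i| = 1$, subject to $\det A = \det B = 1$; since $\gcd(m,n) = 1$, $\prod t_i^m = \prod t_i^n = 1$ forces $\prod t_i = 1$. Conversely any such $(t_1,\ldots,t_r)$ with $\prod t_i = 1$ gives such a representation. Hence the isomorphism $(A,B) \mapsto \{t_1,\ldots,t_r\}$ of Proposition \ref{prop:totally-reducible-Ur} restricts to an isomorphism $\mathcal{Y}_r^{\textup{TR}} \cong p^{-1}(1)$.

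Second, I would simply invoke the computation already carried out in the body of the proof of Proposition \ref{prop:totally-reducible-Ur}: there it is shown that $p^{-1}(1)$ is homeomorphic, via $\varphi$, to the set $A$ of \eqref{eq:redsimplex}, and then the change of coordinates $u_i = s_{i+1} - s_i$ for $i = 1,\ldots,r-1$ and $u_r = s_1 + 1 - s_r$ identifies $A$ with the simplex $\Delta_{r-1} = \{(u_1,\ldots,u_r) \in \mathbb{R}^r \mid u_i \geq 0,\ \sum u_i = 1\}$, the inverse being $s_1 = -\sum_{i=1}^{r-1}\frac{r-i}{r}u_i$ and $s_i = s_1 + u_1 + \cdots + u_{i-1}$. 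Composing these identifications gives $\mathcal{Y}_r^{\textup{TR}} \cong \Delta_{r-1}$.

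There is essentially no obstacle here: the entire content is contained in the proof of Proposition \ref{prop:totally-reducible-Ur}, and the only thing to be careful about is making explicit that the fiber of the determinant map over $1 \in S^1$ is exactly $\mathcal{Y}_r^{\textup{TR}}$ rather than some larger set — but this is immediate from the $\SU(r)$ determinant condition and the coprimality of $m$ and $n$. So the proof is just two sentences: restrict the isomorphism of Proposition \ref{prop:totally-reducible-Ur} to $p^{-1}(1)$, and quote the identification of that fiber with $\Delta_{r-1}$ established there.

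\begin{proof}
By the remark following Proposition \ref{prop:semi-simple}, a totally reducible $\SU(r)$-representation corresponds to a pair $(A,B)$ with $A = \diag(t_1^m,\ldots,t_r^m)$ and $B = \diag(t_1^n,\ldots,t_r^n)$, $|t_i| = 1$, subject to $\det A = \det B = 1$; since $\gcd(m,n) = 1$, the conditions $\prod_i t_i^m = \prod_i t_i^n = 1$ are together equivalent to $\prod_i t_i = 1$. Hence, under the isomorphism $\tilde{\mathcal{Y}}_r^{\textup{TR}} \cong \Sym^r(S^1)$, $(A,B) \mapsto \{t_1,\ldots,t_r\}$, of Proposition \ref{prop:totally-reducible-Ur}, the subspace $\mathcal{Y}_r^{\textup{TR}} \subset \tilde{\mathcal{Y}}_r^{\textup{TR}}$ is carried isomorphically onto the fiber $p^{-1}(1)$ of the determinant map $p(\{t_1,\ldots,t_r\}) = t_1\cdots t_r$. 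As shown in the proof of Proposition \ref{prop:totally-reducible-Ur}, this fiber is homeomorphic, via the change of coordinates $u_i = s_{i+1}-s_i$ for $i = 1,\ldots,r-1$ and $u_r = s_1+1-s_r$ on the set $A$ of \eqref{eq:redsimplex}, to the simplex $\Delta_{r-1}$. Composing these identifications yields $\mathcal{Y}_r^{\textup{TR}} \cong \Delta_{r-1}$.
\end{proof}
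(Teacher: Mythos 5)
Your proposal is correct and matches the paper's implicit argument exactly: the corollary has no separate proof in the paper because the identification $p^{-1}(1) = \mathcal{Y}_r^{\textup{TR}} \cong \Delta_{r-1}$ is already established inside the proof of Proposition \ref{prop:totally-reducible-Ur} (the paper explicitly says "We check now that ${\mathcal{Y}}^{\textup{TR}}_r$, the fiber over $1$, is an $(r-1)$-simplex"). Your version merely spells out the small point — using $\gcd(m,n)=1$ to pass from $\prod t_i^m = \prod t_i^n = 1$ to $\prod t_i = 1$ — that the paper leaves tacit, which is a reasonable addition.
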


Some results of this section are also proved in \cite[pag.\ 467]{florentino2009topology} by analysing the Lie algebra of ${\mathfrak{su}}(r)$.

\subsection{The irreducible locus}\label{sec:irreducible}

Consider an irreducible representation $(A, B) \in \tilde{\cY}_r^\ast$. We can conjugate $(A,B)$ into its standard form (\ref{eq:standard-form}) so the eigenvalues of $A$ are $\lambda_1, \ldots, \lambda_r$. Let $\nu_1, \ldots, \nu_r$ be the eigenvalues of $B$ so the map $\tilde{H}: \tilde{\cY}_r^\ast \to \Sym^r(S^1) \times \Sym^r(S^1)$ is $(A, B) \mapsto (\{\lambda_1, \ldots, \lambda_r\}, \{\nu_1, \ldots, \nu_r\})$.

\begin{prop}\label{prop:eigenvalues}
Let $(A,B) \in \tilde{\cY}_r^\ast$ with eigenvalues $\lambda_i$ and $\nu_i$, respectively. Then 
there exists $\varpi \in S^1$ such that $A^n = B^m = \varpi \Id$. Moreover, there exist unique $t_1, \ldots, t_r \in S^1$ such that $t_i^{nm}=\varpi$, $\lambda_i = t_i^m$ and 
$\nu_i = t_i^n$, for all $1 \leq i \leq r$. If, in addition, $(A,B) \in {\cY}_r^\ast$ then there exist finitely many choices for the eigenvalues $\lambda_i, \nu_i$.
\end{prop}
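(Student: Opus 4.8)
The plan is to analyze the constraints imposed by irreducibility together with the torus knot relation $A^n = B^m$. First I would observe that since $(A,B)$ is irreducible and $A^n = B^m$ commutes with both $A$ and $B$ (it is a power of $A$ and also of $B$), Schur's lemma forces $A^n = B^m = \varpi\,\Id$ for some scalar $\varpi$; since $A \in \U(r)$ (or $\GL(r,\CC)$ with the unitary normal form), all eigenvalues of $A$ lie in $S^1$, hence $\varpi \in S^1$.

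Next I would extract the root-of-unity structure. Because $A^n = \varpi\,\Id$, each eigenvalue $\lambda_i$ of $A$ satisfies $\lambda_i^n = \varpi$; similarly each eigenvalue $\nu_j$ of $B$ satisfies $\nu_j^m = \varpi$. Fixing one $nm$-th root $\varpi_0$ of $\varpi$ (so $\varpi_0^{nm} = \varpi$), every $n$-th root of $\varpi$ has the form $\varpi_0^{m}\zeta$ for an $n$-th root of unity $\zeta$, and likewise every $m$-th root has the form $\varpi_0^{n}\xi$. Using coprimality of $n$ and $m$, for each eigenvalue $\lambda_i$ one can solve $\lambda_i = t_i^m$ with $t_i^{nm} = \varpi$ uniquely: writing $\lambda_i = \varpi_0^m \zeta_i^m$ for the appropriate choice (here I use that $x \mapsto x^m$ is a bijection on the group of $n$-th roots of unity since $\gcd(n,m)=1$), set $t_i = \varpi_0 \zeta_i$, which is the unique $nm$-th root of $\varpi$ with $t_i^m = \lambda_i$. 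This mirrors the uniqueness argument already used in Lemma \ref{lem:irred-dim1}, and it simultaneously pins down $\nu_i := t_i^n$.

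Then I would check consistency: we must verify that the $\nu_i = t_i^n$ obtained this way are genuinely the eigenvalues of $B$, not merely some candidate values. This uses $B^m = \varpi\,\Id$ again (so the eigenvalues of $B$ are among the $m$-th roots of $\varpi$, each of which is $t^n$ for a unique $nm$-th root $t$ of $\varpi$) combined with the fact that $A$ and $B$ share a common "diagonalizing data" up to the standard form — more precisely, the assignment $t \mapsto (t^m, t^n)$ is injective on $nm$-th roots of $\varpi$ by coprimality, so the multiset $\{(\lambda_i,\nu_i)\}$ is determined once one knows that the relation $A^n = B^m$ ties the two eigenvalue multisets through their common powers $\lambda_i^n = \nu_i^m = \varpi$. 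I expect the slightly delicate point is justifying that the pairing of $\lambda$'s with $\nu$'s is the correct one; this is where I would invoke the earlier observation that in the standard form the representation is determined and that being a simultaneous root fixes the bijection (cf.\ the remark after \eqref{eq:standard-form-eigen}).

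Finally, for the last clause, if $(A,B) \in \cY_r^\ast$ then additionally $\det A = \prod \lambda_i = 1$ and $\det B = \prod \nu_i = 1$, equivalently $\prod t_i^m = \prod t_i^n = 1$; since $\gcd(n,m)=1$ this forces $(\prod t_i)^{?}$ — combining $\prod t_i^n = \prod t_i^m = 1$ gives $\prod t_i$ is both an $n$-th and an $m$-th root of unity, hence $\prod t_i = 1$. The key finiteness input is that $\varpi$ itself must be an $nm$-th root of unity: from $\varpi^r = \det(A^n) = (\det A)^n = 1$ we get $\varpi \in \mu_r$, so there are finitely many admissible $\varpi$, and for each the eigenvalues $\lambda_i, \nu_i$ range over finite sets of roots of unity, hence only finitely many choices. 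I would close by noting this is exactly the finiteness that makes the stratification of Section \ref{sec:stratification} combinatorial. The main obstacle, as noted, is the bookkeeping that shows the $t_i$ (and hence the pairing $\lambda_i \leftrightarrow \nu_i$) are \emph{uniquely} determined rather than just existing.
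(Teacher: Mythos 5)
Your overall strategy (Schur to force $A^n = B^m = \varpi\Id$, pass to $n$-th and $m$-th roots of $\varpi$, use $\det(A^n)=1$ to force $\varpi\in\bm{\mu}_r$ for finiteness) coincides with the paper's. However, your construction of the $t_i$ has a real flaw, which is precisely the "bookkeeping" step you flagged as the main obstacle. You build $t_i$ from $\lambda_i$ \emph{alone}: you fix an arbitrary $nm$-th root $\varpi_0$ of $\varpi$, write $\lambda_i = \varpi_0^m\zeta_i^m$ with $\zeta_i\in\bm{\mu}_n$, and set $t_i = \varpi_0\zeta_i$. But the conditions $t_i^m=\lambda_i$ and $t_i^{nm}=\varpi$ do not single out one $t_i$ (the latter is automatic from the former, leaving $m$ candidates); your $t_i$ is only pinned down by the extra, artificial constraint $t_i \in \varpi_0\bm{\mu}_n$, which depends on the choice of $\varpi_0$. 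Worse, nothing guarantees that the resulting $t_i^n$ equals the given eigenvalue $\nu_i$ of $B$, and your proposed fix (appealing to a "correct pairing" forced by the standard form) is a red herring: there is no canonical pairing, and one is not needed.

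The clean resolution is to build $t_i$ from the \emph{pair} $(\lambda_i,\nu_i)$ rather than from $\lambda_i$ alone, for whichever ordering of the eigenvalues one has fixed. Given any $\lambda$ with $\lambda^n = \varpi$ and any $\nu$ with $\nu^m = \varpi$, pick B\'ezout coefficients $am+bn=1$ and set $t := \lambda^a\nu^b$. Then $t^m = \lambda^{am}\nu^{bm} = \lambda^{am}(\lambda^n)^b = \lambda$ and $t^n = (\nu^m)^a\nu^{bn} = \nu$, and any $t$ satisfying $t^m=\lambda$, $t^n=\nu$ must equal $(t^m)^a(t^n)^b = \lambda^a\nu^b$, which gives uniqueness. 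In particular the claimed $t_i$ exist for \emph{any} bijection of the $\lambda_i$ with the $\nu_j$, so there is no consistency to check and no dependence on $\varpi_0$. This is essentially what the paper does (asserting existence and uniqueness of $t$ with $\lambda_1 = t^m$, $\nu_1 = t^n$ from $\gcd(n,m)=1$), and the remark following the proposition confirms that $t$ depends only on the chosen ordering of eigenvalues, not on any privileged pairing. Your Schur argument and the finiteness argument ($\varpi^r = \det(A^n) = 1$, hence $t\in\bm{\mu}_{nmr}$) are correct and agree with the paper.
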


\begin{proof}
Let $(A,B) \in\tilde{\mathcal{Y}}^{\ast}_{r}$. Since $A^nB=B^mB=BB^m=BA^n$ we have that $A^n$ is an 
$\U(r)$-equivariant map. Hence, by Schur 
lemma we have $A^n = \varpi \Id$ for some $\varpi \in \CC^*$ and, a fortiori, since $A^n \in \U(r)$ also $\varpi \in S^1$. Obviously, we also have $B^m = A^n = \varpi \Id$.

As a consequence, if $\lambda_1, \ldots, \lambda_r$ and $\nu_1, \ldots, \nu_r$ are the eigenvalues of $A$ and $B$, we have
 \begin{align*}
  \lambda_j=\epsilon_j \lambda_1, \quad \epsilon_j\in \bm{\mu}_n, \quad j\geq 2, \\
  \nu_j=\varepsilon_j \nu_1, \quad \varepsilon_j\in \bm{\mu}_m, \quad j\geq 2.
  \end{align*}
Here $\bm{\mu}_s$ denotes the group of $s$-th roots of unity.  In particular, $\lambda_1=t^m$ and $\nu_1=t^n$, for some 
$t\in S^1$. As $\gcd(n,m)=1$, this $t$ is unique. Clearly $t^{nm}=\varpi$.

If the representation is in $\cY^{\ast}_r$, that is, an $\SU(r)$-representation, then $\varpi^r = \det(A^n) = 1$, so $\varpi \in \bm{\mu}_r$ . This implies that $t^{nmr} = \varpi^r = 1$, thus $t \in \bm{\mu}_{nmr}$. This gives finitely many choices for the eigenvalues of $A$ and $B$.
\end{proof}

\begin{rem}
The element $t$ only depends on a given choice of $\lambda_1,\nu_1$, that is, a given ordering of the eigenvalues.
\end{rem}

We can stratify $\Sym^r(S^1)$ according to the number of coincident eigenvalues. To be precise, given a partition $\tau = (r_1,\overset{(a_1)}{\ldots},r_1,\ldots,r_s,\overset{(a_s)}{\ldots},r_s) \in \Pi_r$, let us denote by $\Sym^r_{\tau}(S^1)$ the collection of sets $\{\lambda_1, \ldots, \lambda_r\}$ such that there are $a_1$ groups of $r_1$ equal eigenvalues, $a_2$ groups of $r_2$ equal eigenvalues and so on. In particular, $\tau_0 = (1,\overset{(r)}{\ldots},1)$ corresponds to different eigenvalues. In this way, given partitions $\tau_1, \tau_2 \in \Pi_r$, we set
$$
	\tilde{\cY}_{\tau_1, \tau_2}^\ast = \tilde{H}^{-1}(\Sym^r_{\tau_1}(S^1) \times \Sym^r_{\tau_2}(S^1)).
$$
These are isomorphism classes of representations $(A,B)$ such that $A$ has coincident eigenvalues given by $ \tau_1$ and $B$ has coincident eigenvalues given by $\tau_2$. Each representation belongs to one of these sets, and the associated pair of partitions $(\tau_1, \tau_2)$ is called the \emph{type} of the representation. Hence, we get a stratification
\begin{equation}\label{eq:strata-type}
	\tilde{\cY}_{r} = \bigsqcup_{\tau_1, \tau_2 \in \Pi_r} \tilde{\cY}_{\tau_1, \tau_2}^\ast.
\end{equation}
Notice that some of the strata $\tilde{\cY}_{\tau_1, \tau_2}^\ast$ may be empty in this decomposition. Moreover, this shows that the maximal component of $\tilde{\cY}_{r}^\ast$ corresponds to the type $(\tau_0, \tau_0)$.

\begin{thm}\label{thm:orthant}
The fiber of the map $\tilde{H}: \tilde{\cY}_{\tau_0, \tau_0} \to \Sym^r_{\tau_0}(S^1) \times \Sym^r_{\tau_0}(S^1)$
is isomorphic to 
  $$
  \bm{F}_r:= (S^1)^r \backslash \U(r) \,\slash (S^1)^r,
   $$ 
where the first $(S^1)^r$ acts by row multiplication and the second one by column multiplication. Furthermore, there is a surjective map
$\varphi:\bm{F}_r\to B$, where $B$ is the closed orthant $B^{r-1}\subset S^{r-1}$, defined as
$$
	B^{r-1} = \left\{(b_1, \ldots, b_r) \in \mathbb{R}_{\geq 0}^r \,\left| \, b_{1}^2+\ldots +b_{r}^2=1 \right.\right\}.
$$
satisfying the following properties:
\begin{itemize}
	\item Let $B_0 \subset B$ be the interior of the orthant. Then $\varphi^{-1}(B_0)$ is solely composed of irreducible representations and $\varphi|_{\varphi^{-1}(B_0)}$ is a trivial fiber bundle with fiber $\mathbb{C}P^{r-2}\rtimes\mathbb{C}P^{r-3}\rtimes \ldots \rtimes\mathbb{C}P^1$ 
	(that is, an iterated bundle of these spaces).
	\item The preimage of the boundary of the orthant contains both reducible and irreducible representations. 
\end{itemize}
\end{thm}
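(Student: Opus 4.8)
The plan is to analyze the fibre of $\tilde H$ over a point $(\{\lambda_1,\ldots,\lambda_r\},\{\nu_1,\ldots,\nu_r\})$ with all $\lambda_i$ distinct and all $\nu_i$ distinct. First I would fix such a point and observe that, by Proposition~\ref{prop:eigenvalues}, the eigenvalues are rigid: once the common scalar $\varpi$ with $A^n=B^m=\varpi\Id$ is fixed, the $\lambda_i,\nu_i$ are determined by the unique $t_i$, so the only freedom in a representation $(A,B)$ mapping to that point is the relative position of the eigenbasis of $A$ and that of $B$. Normalizing $A$ to be the diagonal matrix $\diag(\lambda_1,\ldots,\lambda_r)$ as in (\ref{eq:standard-form}) uses up conjugation by $\U(r)$ up to the residual torus $T=(S^1)^r$ (since the $\lambda_i$ are distinct, the stabilizer of $A$ is exactly the diagonal torus). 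Writing $B=P\,\diag(\nu_1,\ldots,\nu_r)\,P^{-1}$ as in (\ref{eq:standard-form-eigen}), the matrix $B$ is determined by $P\in\U(r)$, and changing the orthonormal eigenbasis of $B$ by a permutation-free rescaling means $P$ is only well-defined up to right multiplication by $T$ (the $\nu_i$ being distinct kills permutations). Conjugating $(A,B)$ by the residual torus acts on $P$ by left multiplication by $T$. Hence the fibre is canonically $T\backslash\U(r)/T=\bm F_r$, which is the first assertion.

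Next I would construct $\varphi\colon\bm F_r\to B^{r-1}$. Given $[P]\in T\backslash\U(r)/T$, take the last row (or first column — any fixed one) of $P$; its entries are $(p_1,\ldots,p_r)$ with $\sum|p_j|^2=1$, and left/right torus multiplication changes each $p_j$ only by a phase, so $(|p_1|,\ldots,|p_r|)\in B^{r-1}$ is well-defined; set $\varphi([P])=(|p_1|,\ldots,|p_r|)$. Surjectivity follows because any prescribed nonneg vector of unit norm can be completed to a unitary matrix (e.g.\ by Gram--Schmidt). For the fibre structure over the interior $B_0$, I would argue by the standard fibration of flag-type spaces: fixing the modulus of the last row to be strictly positive in all coordinates, one peels off the last row (a point of $B_0$, plus a residual phase absorbed by the torus), leaving the orthogonal complement data, which is a $\CC P^{r-2}$ worth of choices for (the projectivized) next row, then $\CC P^{r-3}$, and so on down to $\CC P^1$; these assemble into the iterated bundle $\CC P^{r-2}\rtimes\cdots\rtimes\CC P^1$, and over the contractible $B_0$ this bundle is trivial. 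I would also check that over $B_0$ every representation is irreducible: if some eigenvalue sub-minor of $P$ vanished, that would force a coordinate subspace spanned by standard basis vectors to be $B$-invariant, contradicting the genericity (all moduli in the chosen row strictly positive forces, after tracking through the unitarity relations, that no proper coordinate block can be invariant); conversely when a modulus hits zero, i.e.\ on $\partial B^{r-1}$, one can exhibit both reducible representations (those where a whole block of $P$ vanishes) and still-irreducible ones (where a single entry vanishes but no block does), giving the last bullet.

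The main obstacle I expect is the bookkeeping in the iterated-bundle claim: making precise, with the torus acting on both sides, exactly which phases and which normalizations are "used up" at each stage so that the successive quotients really are honest $\CC P^{k}$'s and the whole tower is a locally trivial (indeed globally trivial, over the cell $B_0$) fibre bundle rather than merely a set-theoretic stratification. Concretely one must choose, over $B_0$, a continuous section of $\U(r)\to T\backslash\U(r)/T$ that diagonalizes the last-row data, and then iterate; the existence of such a section over the contractible base is what yields triviality, but the inductive setup (at each level the residual group shrinks by one circle factor and the "next row" lives in the unit sphere of an $(r-k)$-dimensional complex space modulo a circle, i.e.\ $\CC P^{r-k-1}$) needs care to state cleanly. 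Everything else — the identification of the fibre of $\tilde H$ with $\bm F_r$, well-definedness and surjectivity of $\varphi$, and the reducible/irreducible dichotomy on interior versus boundary via vanishing sub-minors of $P$ as already set up before Proposition~\ref{prop:eigenvalues} — should be routine given the earlier results.
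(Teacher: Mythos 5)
Your proposal follows essentially the same route as the paper: identify the fiber with the double coset $T\backslash\U(r)/T$ via the residual torus symmetries, define $\varphi$ by the moduli of a fixed column (or row) of $P$, peel off successive eigenvectors to obtain the iterated $\CC P^{r-2}\rtimes\cdots\rtimes\CC P^1$ fiber over the interior, and use Proposition~\ref{prop:semi-simple} together with vanishing sub-minors to decide reducibility on the interior versus the boundary. The paper uses the first column where you propose the last row, but by unitarity this is the same argument up to transposition, and the bookkeeping concern you flag about the iterated bundle is in fact also left implicit in the paper's proof.
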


\begin{proof}
Consider $(A, B) \in \tilde{\cY}_{\tau_0, \tau_0}$ with fixed different eigenvalues and let us write it as in (\ref{eq:standard-form}) where $B$ is decomposed as in (\ref{eq:standard-form-eigen}). There is an action of $(S^1)^r$ on the eigenvectors of $A$, that rescales by  $S^1$ each of the rows of $P$ (a left action by multiplication on $\U(r)$), and an action of $S^1$ on the eigenvectors of $B$, that rescales each column (a right action on $\U(r)$). In this manner, we have that the fiber is contained in
\begin{equation} \label{eq:orthantorbit}
	(S^1)^r \backslash \U(r) \,\slash (S^1)^r.
\end{equation}
In this setup, the irreducible fiber $\tilde{\cY}^{\ast}_{\tau_0,\tau_0}$ is the collection of such classes of matrices where there exist no $s\times(r-s)$ zero minors in $B$, since otherwise the subspace generated by some of the eigenvectors of $A$ and $B$ would be invariant.

Let us describe the open set of this double quotient $(S^1)^r \backslash \U(r) \,\slash (S^1)^r$ where the first column vector has all its component non-zero. First, note that such representations will be irreducible by Proposition \ref{prop:semi-simple}. Because of the left $(S^1)^r$-action we may assume that the first column $(p_{11},p_{21},\ldots,p_{r1})\in \mathbb{R}^r_{>0}$, 
with $\sum p_{i1}^2=1$, defining an orthant $B^{r-1}_0$ of $S^{r-1}$, homeomophic to an open $(r-1)$-ball. The second eigenvector is orthogonal to $v_1$ and unitary, so it belongs to $S^{2r-3}\subset \mathbb{C}^{r-1}$. It is uniquely determined up to the $S^1$-action, 
hence it belongs to $S^{2r-3}/S^1\cong \mathbb{C}P^{r-2}$. 
The same construction shows that $w_3\in S^{2r-5}/S^1\cong \mathbb{C}P^{r-3}$, 
$w_4\in \mathbb{C}P^{r-4}$ up to $w_r$, which is uniquely determined up to the $S^1$-action 
on the last column. The real dimension of this set is $(r-1)+2(r-2)+\ldots+2=(r-1)^2$, which matches the dimension of (\ref{eq:orthantorbit}).
\end{proof}

Every reducible representation with eigenvalues of type $(\tau_0,\tau_0)$ will appear as a representation 
that belongs to the boundary of the orthant. For instance, in any case, the vertices $b_i = 1$ of $B$ are reducible representations, 
since they correspond to the case in which the first eigenvector of $B$, $p_1$, agrees with a canonical basis vector, that is, an eigenvector of $A$, hence providing a $1$-dimensional invariant subspace. Moreover, the fiber over a vertex is actually isomorphic to the space $\bm{F}_{r-1}=
(S^1)^{r-1}\backslash \U(r-1)/(S^1)^{r-1}$.
The set of totally reducible representations $\tilde{\mathcal{Y}}_r^{TR}$ appears at
the corners $b_i$ of the orthant $B$. 

In general, a reducible representation will occur when there is a $k$-dimensional invariant subspace $H=\langle e_{a_1},\ldots,e_{a_k} \rangle=\langle p_{b_1},\ldots,p_{b_k} \rangle$, which gives rise to a identically zero sub-minor in $P$ corresponding to the indices $(\lbrace 1,\ldots,r \rbrace- \lbrace a_1,\dots,a_k \rbrace)\times \lbrace b_1,\ldots,b_k \rbrace$. Because of Proposition \ref{prop:semi-simple}, the induced representation on $H^{\perp}$ will also determine 
an $(r-k)$-dimensional sub-representation. 
In the case that the representation is sum of two irreducible representations, that is corresponding to $\pi=(k,r-k)$, then the corresponding
subset of the fiber is given by 
 \begin{equation}\label{eq:orthred}
  \bm{F}_{k} \times \bm{F}_{r-k} \, .
 \end{equation}

\section{$\U(2)$-character variety}\label{sec:u2-char}

For $\U(2)$, we have two possible partitions: $(2)$ and $(1,1)$.

\begin{enumerate}
\item\label{enum:U2-irred} The partition $\pi=(2)$ corresponds to the irreducible representations. Observe that, in this case, both the eigenvalues of $A$ and $B$ must be different. Hence, by Theorem \ref{thm:orthant}, we know that the fiber of the map 
$\tilde{H}: \tilde{\cY}_2^\ast \to \Sym^2(S^1) \times \Sym^2(S^1)$ is contained in the orthant 
$B_0 = \{p_1^2 + p_2^2 = 1 \,|\, p_1, p_2 > 0\}$, which is isomorphic to the segment $(0,1)$. Furthermore, in the closure of this orthant, the only reducible representations correspond to the endpoints of the orthant and thus, the fiber of the eigenvalue map is exactly the open segment $(0,1)$.
	
By the proof of Proposition \ref{prop:eigenvalues}, the eigenvalues of $A$ are of the form $(\lambda,\lambda\epsilon)$ with
 $$
  \epsilon\in \bm{\mu}_n^*=\bm{\mu}_n-\{1\}.
  $$
As we can swap the eigenvalues, and this sends $\epsilon\to\epsilon^{-1}$,
we can assume 
 $$
 \epsilon\in \bm{\mu}_n^+=\{z\in \bm{\mu}_n^*|\Im z\geq 0\}. 
 $$
In the case that $n$ is even and $\epsilon=-1$,
we still have an action by swapping the eigenvectors of $A$.
Similarly, the eigenvalues of $B$ are $(\nu,\nu\varepsilon)$, with $\varepsilon\in \bm{\mu}_m^+$. Again by Proposition \ref{prop:eigenvalues}, there exists a unique $t \in S^1$ such that $t^m = \lambda$ and $t^n = \nu$,
so we get that the image of the eigenvalue map is $S^1 \times \bm{\mu}_n^+ \times \bm{\mu}_m^+$.

In the case that $n, m$ are both odd, we thus get that the eigenvalue map gives a trivial fibration $\tilde{H}: \tilde{\cY}_2^\ast = (0,1) \times S^1 \times \bm{\mu}_n^+ \times \bm{\mu}_m^+ \to S^1 \times \bm{\mu}_n^+ \times \bm{\mu}_m^+ $. Hence, we get 
 $\frac14 (n-1)(m-1)$ copies of the cylinder 
  $$
   \bm{C}:=(0,1)\x S^1.
   $$

Now suppose $n$ even and $m$ odd (the reverse case is similar by swapping $n,m$). Then there are $\lfloor\frac12(n-1)\rfloor \frac12(m-1)$
components $\bm{C}=(0,1) \x S^1$ as before. However, for $\epsilon=-1$, there is a residual action 
$(\lambda,-\lambda) \mapsto (-\lambda,\lambda)$
with $(p_{11},p_{21})\to (p_{21},p_{11})$. This changes $t\mapsto -t$, $\nu=t^n$ remains fixed, and $p_{11} \mapsto \sqrt{1-p_{11}^2}$. 
Hence, we get a quotient of $(0,1)\x S^1$ under $(p_{11},t) \sim (1-p_{11},-t)$, which is an open M\"obius band
 $$
  \bm{M} := ((0,1)\x S^1) / (p_{11},t) \sim (1-p_{11},-t) .  
  $$ 
There are $\frac12(m-1)$ components like these.

	\item The partition $\pi = (1,1)$ corresponds to totally reducible representations. In this case, we get by Proposition \ref{prop:totally-reducible-Ur} that $\tilde{\cY}_2^{\textup{TR}} = \Sym^2(S^1)$ is an $S^1$-bundle with fiber
	$$
		\Delta_1 = \left\{(u_1, u_2)\in \mathbb{R}^2 \, \left| \, u_2 = -u_1 + 1, \quad u_1, u_2 \geq 0  \right.\right\}.
	$$
	This is a closed segment parametrized by $u_1 \in [0,1]$. Analyzing the gluing map of Proposition \ref{prop:totally-reducible-Ur} 
	we get that $\tilde{\cY}_2^{\textup{TR}}$ is a closed M\"obius band. 
\end{enumerate}

In this way, the global picture of $\tilde{\cY}_2$ is the following. We have a M\"obius band of totally reducible representations $\bm{R}$. In the case of $n$ and $m$ odd, this band is intersected by $\frac14 (n-1)(m-1)$ cylinders corresponding to the irreducible representations glued through their boundaries $\{0,1\} \times S^1$. If $n$ is even, we get $\lfloor\frac12(n-1)\rfloor\, \frac12(m-1)$ cylinders, 
and also $\frac12(m-1)$ M\"obius bands, all glued to the base band $\bm{R}$ through their boundaries.



If we consider an irreducible representation in $\bm{C}$ for $A\sim(t^m, t^m\epsilon)$, $B\sim (t^n,t^n\varepsilon)$,
with $\epsilon\in \bm{\mu}^+_n$, $\varepsilon\in \bm{\mu}^+_m$, then one of the two reducible representations at the
boundary is given by $\lbrace t, t\alpha \rbrace$, $\alpha^m=\epsilon, \alpha^n=\varepsilon$, $\alpha\in \bm{\mu}_{mn}$. This goes around the longitude of $\bm{R}$, not the central one, and it goes around twice. This also applies to the other reducible representation at the boundary, given by $\lbrace t,t\alpha'\rbrace,$ where now $(\alpha')^m=\epsilon$, $(\alpha')^n=\varepsilon^{-1}$. Note that replacing $\alpha$ by $\alpha^{-1}$ or $\alpha'$ by $(\alpha')^{-1}$ yields the same  pair of reducible representations. 

As a consequence, the two circles of reducible representations at the boundary of $\bm{C}$ both inject into $\bm{R}$ and they do not intersect. Neither do circles that arise from different cilinders. When $n$ is even, $\epsilon =-1$ produces M\"obius band components of type $\bm{M}$. There is a single boundary circle $\lbrace t,t\alpha \rbrace$, where now $\alpha^m=-1, \alpha^n=\epsilon$, that also injects into $\bm{C}$.

\section{$\SU(3)$-character variety} \label{sec:SU3char}

\subsection{Geometric description of each stratum}\label{sec:geometric-description}

For $n=3$, we have three possible partitions: $(3)$, corresponding to the irreducible representations; $(2,1)$, corresponding to sums of
an irreducible $2$-dimensional representation an a $1$-dimensional representation; 
and $(1,1,1)$, corresponding to the totally reducible representations.

\begin{enumerate}
\item[\textbf{(1)}] For $\pi=(1,1,1)$, we deal with ${\mathcal{Y}}^{\textup{TR}}_3$, which is given by $\{t_1,t_2,t_3\} 
\in \Sym^3(S^1)$ such that $t_1t_2t_3=1$. By Proposition \ref{prop:totally-reducible-Ur}, the space is isomorphic to the simplex:
$$
\Delta_3 = \left\{(u_1, u_2, u_3)\in \mathbb{R}^3 \,\left|\, u_1 + u_2 + u_3 = 1, u_i\geq 0 \right.\right\}.
$$
which is a triangle $\bm{T}$ with vertices $(1,0,0),(0,1,0)$ and $(0,0,1)$. 



\item[\textbf{(2)}] For $\pi=(2,1)$, we have $\tilde Y_{\pi} =\tilde\cY_2^\ast \x \tilde\cY_1^\ast$. The determinant condition can be put in one
of the factors, for instance in the second one, thus $Y_{\pi}\cong \tilde\cY_2^\ast$. The description of this space was given in Section \ref{sec:u2-char}.

\item[\textbf{(3)}] For $\pi=(3)$, we deal with irreducible $\SU(3)$-representations $(A,B)$.
Let $\{\lambda_1,\lambda_2,\lambda_3\}$ be the eigenvalues of $A$, and
$\{\nu_1,\nu_2,\nu_3\}$ be the eigenvalues of $B$. We know that 
 $$
 \lambda_1^n=\lambda_2^n=\lambda_3^n=\nu_1^m=\nu_2^m=\nu_3^m=\varpi\in \bm{\mu}_3\, .
 $$
 
We stratify according to the type of the representation as in (\ref{eq:strata-type}).
If the three eigenvalues of $A$ (or of $B$) are equal, then the representation is automatically reducible.
If two eigenvalues of $A$ are equal, and also two eigenvalues of $B$ are equal, then intersecting the
two eigenplanes we get a common eigenvector for $A$ and $B$; and hence $(A,B)$ is reducible.
Therefore we have two cases:

\begin{enumerate}
\item[\textbf{(3a)}] All eigenvalues of $A$ are different, and all eigenvalues of $B$ are different.   
Fix one choice of eigenvalues, and focus on the corresponding space of irreducible representations.
By Theorem \ref{thm:orthant}, this is an open subset of $\bm{F}_3$. There is a map $\bm{F}_3 \to B^2$, the 
$2$-dimensional orthant defined as $B^2=\{(b_1,b_2,b_3)\in \mathbb{R}^3 \mid b_i\geq 0, b_1^2+b_2^2+b_3^2=1\}$, which is a triangle.
Over the interior, we have the fiber $\CC P^1$, defined by the second column of the matrix $P$ in (\ref{eq:standard-form-eigen}),
which is a projective point $[p_{21},p_{22},p_{23}]$, with $b_1p_{21}+b_2p_{22}+b_3p_{23}=0$. 

Over a vertex $b_i=1$, we only have reducible representations, in particular the totally reducible representations
plus some reducible representations of type $(2,1)$. Over a side of the orthant, say $(0,b_2,b_3)$,
we get some reducible representations of type $(2,1)$, and some irreducible ones. The space of irreducible
representations is parametrized by the second column of $P$, given by 
$(p_{21},p_{22},p_{23})$ with $b_2p_{22}+b_3p_{23}=0$. Therefore it is of the form
$(p,- q b_3, qb_2)$. It must be $q\neq 0$ and $p\neq 0$ by irreducibility (in the first case 
a column would have two zeroes, in the second case a row would have two zeroes).
Using the $S^1$-action on this vector, we can assume $q\in \RR_{> 0}$. Moreover, using the $S^1$-action on
the first row, we can take $p\in \RR_{>0}$. Finally $p^2+q^2=1$, so the fiber is parametrized by an open interval $(0,1)$.

%

\item[\textbf{(3b)}] Two eigenvalues of $A$ are equal. Then, the three eigenvalues of $B$ are different (it can be the other
way round, and this case should be counted as well). Now we look at a given component (fixing the eigenvalues).
We fix an orthonormal basis so that $B=\diag(\nu_1,\nu_2,\nu_3)$. The eigenspaces of $A$ are a vector $v=(a,b,c)$ and
the plane $P=\{ax+by+cz=0\}$ perpendicular to $v$. The coordinate vectors should not be $v$ or lie in $P$, hence
$a\neq 0$, $b\neq 0$ and $c\neq 0$. Moving with $S^1\x S^1\x S^1$ as before we can suppose $a,b,c >0$ with
$a^2+b^2+c^2=1$. This produces an open orthant $B^2_0$.

\end{enumerate}
 \end{enumerate}

\subsection{Closure of each strata} In this section, we shall study the closure of each of the strata analyzed in the previous section. 

\subsubsection*{Case (1). Representations of type $(1,1,1)$} This is a closed triangle $\bm{T}$
and thus no further representations appear in its closure.

\subsubsection*{Case (2). Representations of type $(2,1)$}

As described in Section \ref{sec:geometric-description} and \ref{sec:u2-char}, each of the components of $Y_{(2,1)}$ corresponds to a cylinder $\bm{C} = S^1 \times [0,1]$ or to a closed M\"obius band $\bm{M}$, depending on whether the eigenvalues $\lambda_{i_1} = - \lambda_{i_2}$ or not. In particular, no M\"obius bands appear if $n$ and $m$ are odd. In both cases, the boundary of the fibers correspond to totally reducible representations (Case (1)).

\subsubsection*{Case (3a). Irreducible representations with different eigenvalues}
Let us fix  $\bm{\lambda} = \{\lambda_1, \lambda_2, \lambda_3\}$ for $A$ and $\bm{\nu} = \{\nu_1, \nu_2, \nu_3\}$ for $B$ with all the eigenvalues pairwise different. 
In this case, by Theorem \ref{thm:orthant}, we have that the fiber of the eigenvalue map is
$$
	\bm{F}_3 = (S^1)^3 \backslash \U(3) / (S^1)^3 .
$$
Consider the map $\varphi: (S^1)^3 \backslash  \U(3) / (S^1)^3  \to B^2$ of Theorem \ref{thm:orthant}, corresponding to fixing the first eigenvector 
of $P$, where $B = B^2$ is a $2$-dimensional orthant. 
As described in Section \ref{sec:geometric-description}, on the interior of $B$, 
the component $F_{\bm{\lambda}, \bm{\nu}}$ is a trivial bundle with fiber $\CC P^1$. 
On the edges of the orthant, the intersection of the fiber with the irreducible representations is the interval $(0,1)$, as depicted in Figure \ref{fig:Irrrepmaximal}.

Let us study the closure of this set to understand the whole component $F_{\bm{\lambda}, \bm{\nu}} $. First of all, the three vertices $(1,0,0),(0,1,0),(0,0,1)$ of the triangle $B$ are respectively defined by the eigenvector condition $p_1=e_i$, $i=1,2,3$, so they correspond to reducible representations. On any of them, for example $(1,0,0)$, the quotient by the $S^1\times S^1$ actions on the second and third row allows us to assume $p_{22}^2+p_{32}^2=1$, where $p_{22},p_{32}\in \mathbb{R}_{>0}$, hence providing reducible representations of type $(2,1)$ isomorphic to an open interval $(0,1)$. Note that the closure of this interval, defined by the two limit cases $p_{22}=0$ and  $p_{32}=0$, provides two totally reducible representations in 
$\mathcal{Y}^{\textrm{TR}}_3$, 
which correspond to $p_2=e_2, p_3=e_3$ and $p_2=e_3, p_3=e_2$, as pictured in Figure \ref{fig:Irrrepmaximal}. The description is identical on any other given vertex.

\begin{figure}[h]
\includegraphics[width=7cm]{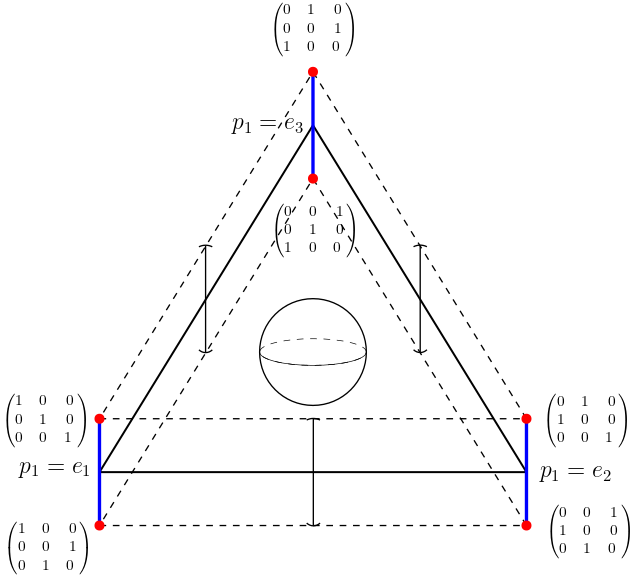}
\caption{A maximal dimensional component of irreducible representations.}
\label{fig:Irrrepmaximal}
\end{figure}

Secondly, on any side of  $B$, the set of irreducible representations for fixed $a,b>0$ was described in Section \ref{sec:geometric-description} as an open interval $(0,1)$ of equation $p^2+q^2=1$, where $p,q>0$. For instance, for the side $(0,b_2,b_3)$,  the second column is of the form $p_2=(p,-qb_3,qb_2)$,
and the other sides are similar.
The cases $p=0$ or $q=0$ lead again to reducible representations of type $(2,1)$, defined by the conditions  $p_3=e_i$ and $p_2=e_i$ respectively,
for the $i$-th side. Therefore, the closure over any side of the triangle 
$B$ are two open intervals $(0,1)$ of type $(2,1)$. 
Their closures provide four totally reducible representations, which were already accounted for over the vertices.

Furthermore, when we take into account all sides of $B$, the global compactification over the boundary is isomorphic to a closed Möbius band, as shown in Figure \ref{fig:closureglobal}. To check it, the condition $p_i=e_j$, 
$i,j=1,2,3$ that defines each of the nine partially reducible strata, 
follows the intersection pattern described in Figure \ref{fig:closureglobal}, connecting the 6 totally reducible representations over the vertices of $B$. Equivalently, we observe that the boundary of the fibers is a connected set, since we can interpolate from any permutation matrix to another through representations of type $(2,1)$. Since the only connected cover of degree $2$ over $S^1$ is the boundary of a M\"obius band, we get the result.

\begin{figure}[h]
\includegraphics[width=8cm]{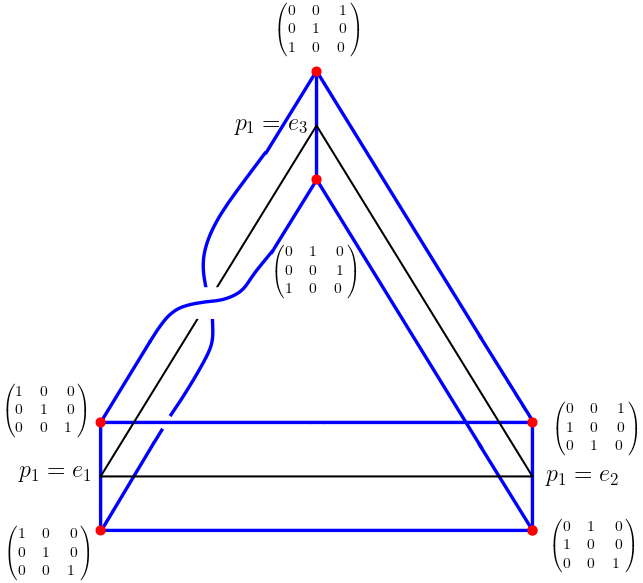}
\caption{Intersection pattern of $(2,1)$ and totally reducible representations.}
\label{fig:closureglobal}
\end{figure}

Notice that this picture agrees with the expected number of reducible representations to be found with eigenvalues $\bm{\lambda} = \{\lambda_1, \lambda_2, \lambda_3\}$ and $\bm{\nu} = \{\nu_1, \nu_2, \nu_3\}$ of $(A,B)$. 
There is an action of ${\mathfrak S}_3 \times {\mathfrak S}_3$ on these sets of eigenvalues by permutation. We may decompose this action as the composition of the ${\mathfrak S}_3$ free action on the eigenvalues of $A$ and the ${\mathfrak S}_3$ simultaneous action on both sets of eigenvalues, so that the latter coincides with the ${\mathfrak S}_3$ action on $\mathcal{Y}_{3}^{\textup{TR}}$. 
Therefore, for any fixed set of distinct eigenvalues $\{\lambda_1,\lambda_2,\lambda_3\}$ and $\{\nu_1,\nu_2,\nu_3\}$ we get:
\begin{itemize}
\item $6$ non-isomorphic totally reducible representations, which correspond to 6 points in $\cY_3^{\textrm{TR}}$, since the second ${\mathfrak S}_3$ action is trivial on 
$\mathcal{Y}_3^{TR}$.
\item $9$ components of reducible representations of type $(2,1)$, defined by the conditions $\lambda_i=\nu_j$, $i,j=1,2,3$ (or equivalently, by the eigenvector condition $p_i=e_j$, $i,j=1,2,3$). Any of these representations will set $p_{ij}=1$, forcing that $p_{il}=0$ for $l\neq j$ and $p_{lj}=0$ when $l\neq i$.
\end{itemize}

\subsubsection*{Case (3b). Irreducible representations with coincident eigenvalues}

Finally, any non-maximal dimensional irreducible stratum was characterized in Section \ref{sec:geometric-description} by two coincident eigenvalues for either $A$ or $B$. In that case, the space is isomorphic to an open disc and it is described by the equation $a^2+b^2+c^2=1$ where $v=(a,b,c)$ is the eigenvector perpendicular to the eigenplane $P$ and $a,b,c>0$. The closure is given by the three vertices $(1,0,0),(0,1,0),(0,0,1)$, which provide 3 totally reducible representations, and the three sides of the disc, which provide reducible representations of type $(2,1)$. Moreover, any point on the interior of a given side, say $(a,b,0)$, determines a single reducible representation, since in that case $(0,0,1)$ belongs to $P$ and defines a common eigenvector for $A$ and $B$. The compactification is isomorphic to a closed disc.
 
 \subsection{Component counting} \label{sec:counting}
 Let us count the number of components that each of the strata of the previous section.
 
 \begin{enumerate}
 	\item[\textbf{(1)}] $Y_{(1,1,1)} = \cY_3^{\textrm{TR}}$ is a connected space, homeomorphic to a triangle.
	\item[\textbf{(2)}] As described in Section \ref{sec:geometric-description}, $Y_{(2,1)}$ is the disjoint union of $\lfloor\frac12(n-1)\rfloor \frac12(m-1)$ cylinders $\bm{C}$ and, in the case when $n$ is even, $\frac12(m-1)$ M\"obius bands $\bm{M}$.
	\item[\textbf{(3a)}]  As in \cite{munozporti2016},
the action of ${\mathfrak S}_3$ on the $\{\lambda_1,\lambda_2,\lambda_3\}$ and
the action of ${\mathfrak S}_3$ on the $\{\nu_1,\nu_2,\nu_3\}$ are free, and the total number of possible eigenvalues is 
 $$
  3\left(\frac{1}{6}\right)^2(n-1)(n-2)(m-1)(m-2)= \frac{1}{12}(n-1)(n-2)(m-1)(m-2).
  $$
  	\item[\textbf{(3b)}] 
When the eigenvalues of $A$ are repeated, say of the form $\{\lambda,\lambda,\lambda\epsilon\}$, with $\epsilon\in \bm{\mu}_n^*$ and $\lambda^3=\epsilon^{-1}$, we have $\lambda\in\bm{\mu}_{3n}-\bm{\mu}_3$, which are $3n-3$ points. 
The eigenvalues of $B$ are of the form $\{\nu_1,\nu_2,\nu_3\}$ with $\nu_1^m=\nu_2^m=\nu_3^m=
\lambda^n$ and $\nu_1\nu_2\nu_3=1$. These are $(m-1)(m-2)$ points, and we have to quotient by the action of ${\mathfrak S}_3$.
The total number of components is thus $\frac12(n-1)(m-1)(m-2)$, as described in \cite{munozporti2016}. Symmetrically, when $B$ has repeated eigenvalues, we find another $\frac12(n-1)(n-2)(m-1)$ components (disjoint to the previous ones).
 \end{enumerate}
 
Recall that there are $\frac{1}{12}(n-1)(n-2)(m-1)(m-2)$ components of type (3a), and each of these components contains $6$ totally reducible representations in their closure. This gives rise to a total of $\frac{1}{2}(n-1)(n-2)(m-1)(m-2)$ intersection points with the set of totally irreducible representations. However, we can provide a different count of the number of totally reducible representations that appear in the closure of each of the strata (3a) and (3b) that will be useful for Section \ref{sec:intersection-tr}.
 
These intersection points were already characterized in Proposition \ref{prop:eigenvalues} for $r=3$ as those satisfying
 \begin{align*}
  \lambda_j=\epsilon_j \lambda_1, \quad \epsilon_j\in \bm{\mu}_n^*, \quad j=2,3, \quad \epsilon_2\neq \epsilon_3,\\
  \nu_j=\varepsilon_j \nu_1, \quad \varepsilon_j\in \bm{\mu}_m^*, \quad j=2,3, \quad \varepsilon_2\neq \varepsilon_3,
  \end{align*}
and $\lambda_1^n=\nu_1^m=\varpi\in \bm{\mu}_3$. Both triples of eigenvalues correspond to a single triple $\{t_1,t_2,t_3\}\in \Sym^3(S^1)$ such that
$$
t_1\in \bm{\mu}_{3nm}, \quad t_2=t_1e^{\frac{2\pi i k}{mn}} \quad t_3=t_1e^{\frac{2\pi i k'}{mn}}
$$
where $k,k'\not\equiv 0 $, $k\not\equiv k'$ (mod $ m$), $k,k'\not\equiv 0 $, $k\not\equiv k'$ (mod $n$), since all eigenvalues are different. Under the identifications made in Proposition \ref{prop:totally-reducible-Ur}, we get that these points are defined by pairs:
$$
u_1=\frac{k}{mn}, \quad u_2=\frac{k'-k}{mn}, \quad k'>k
$$
inside the simplex $\Delta_2$ of equation $u_1+u_2+u_3=1$, $u_i\geq 0$. If we forget about the condition $k'>k$, we are left with $(n-1)(m-1)$ choices for $k$ and also $(n-1)(m-1)$ choices for $k'$, from which we may substract from the latter $n-1$ and $m-2$ points from the conditions $k\equiv k'$ (mod $m$), $k\equiv k'$ (mod $n$) respectively. Thus, the total count equals $(n-1)(m-1)(n-2)(m-2)$ points. Since $k'>k$, we only need to consider half of them, giving a total of $\frac{1}{2}(n-1)(n-2)(m-1)(m-2)$, as expected.  

Furthermore, if we solely fix one of the three conditions $k\equiv 0$, $k\equiv 0$ or $k\equiv k'$ (mod $n$), we are left with $\frac{1}{2}(n-1)(m-1)(m-2)$ choices for such $k,k'$ pairs, giving a total of $\frac{3}{2}(n-1)(m-1)(m-2)$ points. They correspond to the totally reducible representations that appear as the $3$ vertices of the closure of each of the $\frac{1}{2}(n-1)(m-1)(m-2)$ components of type (3b) for coincident eigenvalues for $A$. Analogously, the case of coincident eigenvalues for $B$ is similar and provides $\frac{3}{2}(m-1)(n-1)(n-2)$ points, and thus in total we get 
$\frac32 (n-1)(m-1)(n+m-4)$ 
intersection points.


\section{Intersection with totally reducible representations}\label{sec:intersection-tr}

In this section, we shall consider the eigenvalue map
$$
	H: \cY_3 \to \Sym^3(S^1) \times \Sym^3(S^1).
$$
The fiber of this map depends on the arithmetic of the eigenvalues $\bm{\lambda} = \{\lambda_1, \lambda_2, \lambda_3\}$ of $A$ and $\bm{\nu} = \{\nu_1, \nu_2, \nu_3\}$ of $B$. Set $F_{\bm{\lambda}, \bm{\nu}} = H^{-1}(\bm{\lambda}, \bm{\nu})$. We may distinguish three (not necessarily disjoint) cases:
\begin{enumerate}
	\item[\textbf{(1)}] $F_{\bm{\lambda}, \bm{\nu}}$ only contains totally reducible representations i.e. of type $\pi = (1,1,1)$.
	\item[\textbf{(2)}] $F_{\bm{\lambda}, \bm{\nu}}$ contains representations of type $\pi = (2,1)$. In that case, there must exist different indices $1 \leq i_1, i_2 \leq 3$ and $1 \leq j_1, j_2 \leq 3$ such that $\lambda_{i_1}^n = \lambda_{i_2}^n = \nu_{j_1}^m = \nu_{j_2}^m$. Hence, we have $\lambda_{i_2} = \lambda_{i_1} \epsilon$ and $\nu_{j_2} = \nu_{j_1} \varepsilon$ for $\epsilon \in \bm{\mu}_n^*$ and $\varepsilon \in \bm{\mu}_m^*$, 
	as well as $\lambda_{i_1} = t^m$ and $\nu_{j_1} = t^n$ for some $t \in S^1$. Since the value of the third eigenvalue is determined, this provides finitely many copies of circles in $\Sym^3(S^1) \times \Sym^3(S^1)$, maybe containing points of case (1).
	\item[\textbf{(3)}] $F_{\bm{\lambda}, \bm{\nu}}$ contains irreducible representations. In that case, there must exists $\varpi \in \bm{\mu}_3$ 
	such that $\lambda_i^n = \nu_j^m = \varpi$ for all $i,j$. These are finitely many points in the base which can be also subdivided into:
	\begin{enumerate}
		\item[\textbf{(3a)}] All the eigenvalues $\lambda_i$ are pairwise different, and also all the eigenvalues $\nu_j$. This corresponds to type $\pi = (3)$, 
		case (3a), of Section \ref{sec:geometric-description}.
		\item[\textbf{(3b)}] Two of the eigenvalues $\nu_j$ are equal (or two of the eigenvalues $\lambda_i$ are equal, but not the two conditions
		simultaneously). This corresponds to type $\pi = (3)$, case (3b), 
		of Section \ref{sec:geometric-description}.
	\end{enumerate}
\end{enumerate}

Recall from Section \ref{sec:geometric-description} that the set of totally reducible representations is a $2$-dimensional triangle $\bm{T}$. 
The fiber under the eigenvalue map
$$
	H: \bm{T} \to \Sym^3(S^1) \times \Sym^3(S^1)
$$
is up to $6$ points, depending on how many eigenvalues are repeated. This triangle $\bm{T}$ intersects with the other representations as follows:
\begin{itemize}
	\item $\frac{1}{2}(n-1)(n-2)(m-1)(m-2)$ points of $\bm{T}$ are mapped into strata (3a) under $H$. 
	Each of the components of case (3a) intersects exactly $6$ of these points of $\bm{T}$ through their vertices.
	\item $\frac{3}{2}(n-1)(m-1)(m-2)$ points of $\bm{T}$ are mapped to strata (3b) under $H$, 
	corresponding to repeated eigenvalues in $A$, and another $\frac{3}{2}(m-1)(n-1)(n-2)$ points correspond to repeated eigenvalues in $B$. The components of case (3b), 
	which are triangles, intersect $\bm{T}$ in $3$ points (the $3$ totally reducible representations they contain).
	
	\item There are $\frac12 (n-1)(m-1)$ circles in $\bm{T}$ which are mapped to strata (2). In that case, the components of 
	case (2) intersect these circles through their boundary (two different circles in the case of cylinders $\bm{C}$ 
	and one circle in the case of M\"obius bands $\bm{M}$).
	
	\item The circles corresponding to strata (2) may contain points of strata (3a) and (3b). 
	To be precise, let us consider the case when $m,n$ are odd. The eigenvalues of each of the circles of strata (2) are of the form 
$$\{t^m, t^m\epsilon, t^{-2m}\epsilon^{-1}\}, \quad \{t^n, t^n\varepsilon, t^{-2n}\varepsilon^{-1}\},$$
for $A$ and $B$ respectively, where $\epsilon \in \bm{\mu}_n^+$ and $\varepsilon \in \bm{\mu}_m^+$ are fixed ($t \in S^1$ is varying). Hence, we get that $(t^{-2m}\epsilon^{-1})^n = t^{nm}$ if and only if $t \in \bm{\mu}_{3nm}$. 
Therefore, the circle contains points of strata (3a) and (3b). 
Removing the cases in which the third eigenvalue coincides with the first or the second one, 
this gives $3mn-6m-6n+12$ points of case (3a) on each of this circles. There are also $6m+6n-24$ points of case (3b).

These circles can be explicitly described in $\bm{T}$ as follows: 
taking $\alpha=e^{\frac{2\pi i k}{mn}}\in \bm{\mu}_{mn}$ so that $\alpha^m=\epsilon, \alpha^n=\varepsilon$, each circle is given by
$$
\{ t,t\alpha, t^{-2}\alpha^{-1} \}\in \Sym^3(S^1),
$$
where $t\in S^1$. As $t$ varies, we can consider that $t$ and $t\alpha$ travel once counterclockwise around $S^1$  whereas $t^{2}\alpha^{-1}$ winds $S^1$ twice in the opposite direction. Taking into account the identifications made in Proposition \ref{prop:totally-reducible-Ur} for $t$ and $t\alpha$, either $u_1,u_2$ or $u_3$ will be constant and equal to $\frac{k}{mn}$ or $1-\frac{k}{mn}$. More concretely, the circle in $\bm{T}$ 
is a piecewise linear path that meets the sides of $\bm{T}$ 
at $6$ points, which are precisely those where 
$t^{-2}\alpha^{-1}$ equals either $t$ or $t\alpha$, which happens three times as $t,t\alpha$ and $t^{-2}\alpha^{-1}$ move around $S^1$. At these points the lifts of $t,t\alpha$ and $t^{-2}\alpha^{-1}$ are permuted to satisfy the normalization condition described in Equation (\ref{eq:redsimplex}).  Starting at $u_1=u_2=\frac{k}{mn}$ when $t=1$, the curve is described in Figure \ref{fig:circlesinT}.  

\begin{figure}[h]
\includegraphics[width=11cm]{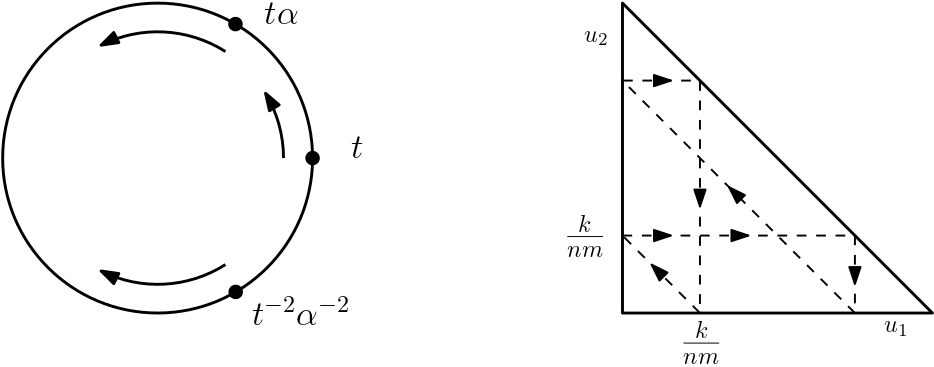}
\caption{Circles in $\bm{T}$ that are mapped to strata (2).}
\label{fig:circlesinT}
\end{figure}

There are $3mn$ rational points of the form $(\frac{k}{mn},\frac{k'}{mn})$ on each of these circles, and $6$ of them  always correspond to reducible representations lying on the sides of $\bm{T}$. 
Inside, we obtain points of cases (3a) and (3b), the latter taking place when only one of the $6$ equations:
$$
k\equiv 0, \qquad k'\equiv 0, \qquad k+k'\equiv 0 \qquad (\text{mod }m),\: (\text{mod }n)
$$
is satisfied. Reducible representations appear when more than one equation is true for the pair $(k,k')$.

\begin{figure}%
    \centering
    \subfloat[\centering Gluing points of cases (1), (2), (3a) and (3b) in $\bm{T}$.]{{\includegraphics[width=6cm]{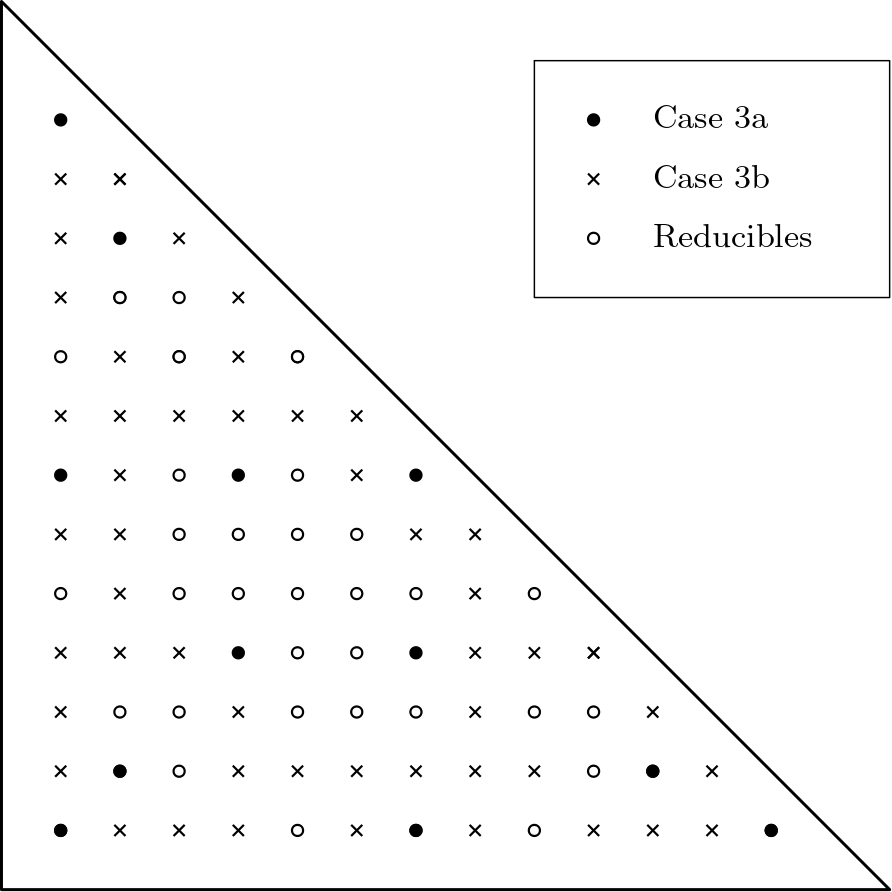} }}%
    \qquad
    \subfloat[\centering A circle in $\bm{T}$, $\alpha=\exp(2 \pi i k/mn)$.]{{\includegraphics[width=6cm]{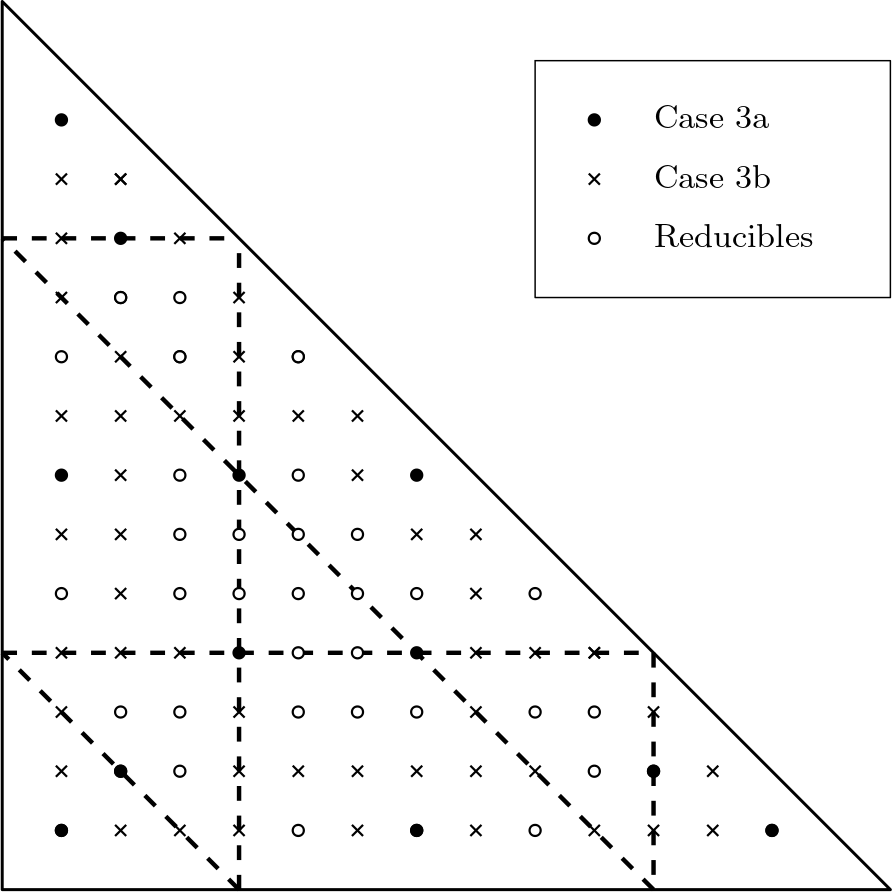} }}%
    \caption{Circles in $T$ for $n=3,m=5$.}%
    \label{fig:gluingpointsinT}%
\end{figure}

When $n$ is even, $\frac{1}{2}(m-1)$ of these circles from strata (2) correspond to boundaries of M\"obius bands that appear in the even case. 
They are also given by 
$\lbrace t,t\alpha,t^{-2}\alpha^{-1} \rbrace \in \Sym^3(S^1)$, where now $\alpha^m=-1$, $\alpha^n=\epsilon \in \bm{\mu}_m^{\ast}$.

	\item From the previous description, observe also that the circles of strata (2) are not disjoint: they may be connected to another through one of these points of cases (3a) and (3b). 
	In fact, any point corresponding to case (3a) or (3b) is shared by exactly $3$ circles 
	(the $3$ possible choices of getting a $(2,1)$-representation from them).	
	\item Each time that one of these circles of strata (2) contains a pair of points of type (3b), the corresponding 
	fiber $[0,1]$ of the cylinder/M\"obius band is glued with the edge of a triangle of type (3b). 
	Observe that then each triangle of type (3b) is attached to $3$ cylinders/M\"obius bands corresponding to different circles.

	\item At each of the circles of strata (2) that contain a pair of points of type (3a), the corresponding fiber $[0,1]$ of the cylinder/M\"obius band is glued with a segment between two totally reducible representations of the boundary over the edges of a component of type (3a) (the blue segments in Figure \ref{fig:closureglobal}). Each of these components of type (3a) contains $9$ such segments, so the component is glued with $9$ cylinders/M\"obius bands. 
\end{itemize}

\section{Homological invariants}\label{sec:homological-invariants}

From the descriptions of Sections \ref{sec:SU3char} and \ref{sec:intersection-tr}, it is possible to compute several homological invariants of the $\SU(3)$-character variety.

\subsection{Euler characteristic with compact support}\label{sec:homological-invariants-euler}

In this section, we shall compute the Euler characteristic with compact support of $\cY_3$ in the case that both $n$ and $m$ are odd. Recall that given a topological space $X$, its Poincar\'e polynomial with compact support is
$$
	P_c(X) = \sum_{k} \dim_\RR H_c^k(X; \RR)\,t^k \in \ZZ[t],
$$
from which we can form the Euler characteristic with compact support as the integer
$$
	\chi_c(X) = P_c(X)(-1) = \sum_{k} (-1)^k \dim_\RR H_c^k(X; \RR) \in \ZZ.
$$
For our purpose, we shall exploit that this Euler characteristic is additive, that is,
$$
	\chi_c(X) = \chi_c(U) + \chi_c(X - U)
$$
for any open subset $U \subset X$. 
It is noticeable that it is also multiplicative for products $\chi_c(X \times Y) = \chi_c(X) \chi_c(Y)$.

Now, let us compute the Euler characteristic for each stratum of $\cY_3$.

\begin{enumerate}
	\item[\bf{(1)}] For representations of type $\pi = (1,1,1)$ we have $Y_{(1,1,1)} = \cY^{\textrm{TR}}_3$, which has $P_c(\cY^{\textrm{TR}}_3) = 1$ so $\chi_c(\cY^{\textrm{TR}}_3) = 1$.
	\item[\bf{(2)}] For representations of type $\pi = (2,1)$, we have that they are the open cylinders $\bm{C}$ and M\"obius bands $\bm{M}$. Both have compactly-supported Poincar\'e polynomials $P_c(\bm{C}) = P_c(\bm{M}) = t^2+t$ so $\chi_c(\bm{C}) = \chi_c(\bm{M}) = 0$. Hence, this stratum provides no contribution.
	\item[\bf{(3)}] For representations of type $\pi = (3)$, we have two possibilities:
		\begin{itemize}
			\item[\bf{(3a)}] In the case (3a), we have that each component of $Y_{(3)}$ is $\bm{F}_3 - \cY^{\textrm{TR}}_3 - Y_{(2,1)}$. 
			Let us decompose the orthant triangle $B$ into its interior $B_0$ and its boundary $\partial B = B - B_0$. 
			The fibration $\varphi: \bm{F}_3|_{\varphi^{-1}(B_0)} \to B_0$ 
			is trivial with fiber $S^2$, so $\chi_c(\varphi^{-1}(B_0)) = \chi_c(B_0)\chi_c(S^2) = 2$. 
Regarding the stratum $\varphi^{-1}(\partial B)$ , this is a closed M\"obius band so it has vanishing Euler characteristic with compact support. 
We have to remove the reducible representations of this set, which is the boundary of the M\"obius band and three segments, with Euler characteristics 
$\chi_c(\partial \overline{\bm{M}}) = \chi_c(S^1) = 0$ and $\chi_c((0,1)) = -1$. Hence $\chi_c(\varphi^{-1}(\partial B_0) \cap Y_{(3)}) 
= 0-0-3\cdot (-1) = 3$ and in 
total $\chi_c(\bm{F}_3 - \cY^{\textrm{TR}}_3 - Y_{(2,1)}) =5$ and  
$\chi_c(Y_{(3)}^{a}) = \frac{5}{12}(n-1)(n-2)(m-1)(m-2)$.
 \item[\bf{(3b)}] In the case (3b), we have that, for each component, the irreducible representations form an open $2$-dimensional triangle, with Euler characteristic $1$. Taking into account that there are $\frac12(n-1)(m-1)(n+ m - 4)$ of these triangles, 
 this gives a total contribution of $\chi_c(Y_{(3)}^{b}) = \frac12(n-1)(m-1)(n+ m - 4)$.
\end{itemize}
\end{enumerate}

Adding up all these contributions, we finally get 
$$
	\chi_c(\cY_3) = 1 + (n-1)(m-1) \left( \frac{n+ m - 4}{2} + \frac{5(n-2)(m-2)}{12} \right).	
$$

\begin{rem}
This Euler characteristic coincides with the one of $\cX_3 = X(\G_{n,m}, \SL(3, \CC))$, as obtained by setting $\mathbb{L} = 1$ in the formula of its motive \cite[Theorem 8.3]{munozporti2016}.
\end{rem}

\subsection{Homology for $n=2$ and $m$ odd}\label{sec:homological-invariants-homology}

In this section, we shall compute the homology of $\cY_3$ for $n = 2$ and $m > 2$ odd. In that case, from the counting of Section \ref{sec:counting}, we find that some strata are simplified:

\begin{itemize}
	\item In case (1), we get a triangle $\cY_3^{\mathrm{TR}}$, which is a contractible space.
	\item In case (2), there are no cylinders, and we find $N_1 = \frac{1}{2}(m-1)$ closed M\"obius bands $\overline{\bm{M}}$ attached to $\cY_3^{\mathrm{TR}}$ through copies of $S^1$, one per component. When collapsing 
	$\cY_3^{\mathrm{TR}}$, the M\'obius bands get their boundaries collapsed.
	\item There are no strata of case (3a) (irreducible with different eigenvalues).
	\item There are $N_2 = \frac12(m-1)(m-2)$ components of case (3b), which are triangles with edges attached to meridians of corresponding components $\overline{\bm{M}}$.
\end{itemize}

Let us take $X = \cY_3^{\mathrm{TR}} \cup Y_{(2,1)}$, 
that is, a disc with $N_1$ M\"obius bands attached through their boundaries. Collapsing the boundary of a M\"obius band is equivalent to attaching a disc to it, which converts it into a real projective plane. Hence, contracting $\cY_3^{\mathrm{TR}}$ we get that $X$ is homotopically equivalent to a bouquet of $N_1$ real projective planes, so its homology is $H_0(X) = \ZZ$, $H_1(X) = \ZZ_2^{N_1}$ and $H_2(X) = 0$.

Now, let $Y$ be the union of the closures of the $N_2$ triangles of case (3b). We have that $Y$ is a disjoint union of $N_2$ closed discs, so $H_0(Y) = \ZZ^{N_2}$, $H_1(Y) = H_2(Y) = 0$. The intersection $X \cap Y$ is the collection of edges of the triangles (3b) so they are a disjoint union of $N_2$ circles. Hence $H_0(X \cap Y) = H_1(X \cap Y) = \ZZ^{N_2}$ and $H_2(X \cap Y) = 0$.

Therefore, using the Mayer-Vietoris long exact sequence and observing that $X \cup Y$ is homotopically equivalent to $\cY_3$, we get
\begin{center}
\begin{tikzpicture}[descr/.style={fill=white,inner sep=1.5pt}]
        \matrix (m) [
            matrix of math nodes,
            row sep=1em,
            column sep=2.5em,
            text height=1.5ex, text depth=0.25ex
        ]
        { 0 & H_2(X \cap Y) = 0 & H_2(X) \oplus H_2(Y) = 0 & H_2(\cY_3) & \\
            & H_1(X \cap Y) = \ZZ^{N_2} & H_1(X) \oplus H_1(Y) = \ZZ_2^{N_1} & H_1(\cY_3) & \\
            & H_0(X \cap Y) = \ZZ^{N_2} & H_0(X) \oplus H_0(Y) = \ZZ^{N_2+1} & H_0(\cY_3) = \ZZ & 0\\
        };

        \path[overlay,->, font=\scriptsize,>=latex]
        (m-1-1) edge (m-1-2)
        (m-1-2) edge (m-1-3)
        (m-1-3) edge (m-1-4)
        (m-1-4) edge[out=355,in=175]  (m-2-2)
        (m-2-2) edge node[above]{$f$} (m-2-3)
        (m-2-3) edge (m-2-4)
        (m-2-4) edge[out=355,in=175]  (m-3-2)
        (m-3-2) edge (m-3-3)
        (m-3-3) edge (m-3-4)
        (m-3-4) edge (m-3-5);
\end{tikzpicture}
\end{center}

Notice that the map $f$ sends each boundary of a triangle to the three meridians of the M\"obius bands it intersects. A key fact is the following.

\begin{lem}\label{lem:f-surjective}
The map $f$ is surjective.
\end{lem}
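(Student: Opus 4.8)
The goal is to show that $f: H_1(X\cap Y) = \ZZ^{N_2} \to H_1(X)\oplus H_1(Y) = \ZZ_2^{N_1}$ is surjective, where $H_1(Y) = 0$ so only the $H_1(X) = \ZZ_2^{N_1}$ part matters. The plan is to work combinatorially with the incidence data between the $N_2$ triangles of type (3b) and the $N_1$ M\"obius band components of type (2), exploiting the explicit description of the circles in $\bm{T}$ in terms of the arithmetic of the eigenvalues given in Section \ref{sec:intersection-tr}. For $n=2$, a M\"obius band component corresponds to a fixed choice of $\varepsilon\in\bm\mu_m^+$ (the $\epsilon\in\bm\mu_2^*$ is forced to be $-1$), and its core circle carries the class generating the corresponding $\ZZ_2$ summand of $H_1(X)$; a triangle of type (3b) is attached along three of its edges to three such circles. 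So the map $f$ is, up to sign (but everything is mod $2$), the incidence matrix of the bipartite ``triangle--circle'' graph, and surjectivity amounts to showing that this $\ZZ_2$-matrix has full row rank $N_1$, i.e.\ that no nonzero $\ZZ_2$-functional on the M\"obius components annihilates every triangle's boundary image.

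\textbf{Key steps.} First I would translate the attaching data into explicit index notation: using the eigenvalue bookkeeping of Section \ref{sec:counting} and \ref{sec:intersection-tr}, with $n=2$, parametrize the type (3b) components by the pair of conditions (repeated eigenvalue for $A$ at $-1$; a triple $\bm\nu = \{\nu_1,\nu_2,\nu_3\}$ of $B$-eigenvalues), and parametrize the circles of type (2) by the values of $k\ (\mathrm{mod}\ m)$ coming from $\varepsilon = e^{2\pi i k/m}$ (the $\mathrm{mod}\ n=2$ part being trivial). Then a type (3b) point on a circle occurs exactly when precisely one of the six congruences $k\equiv 0,\ k'\equiv 0,\ k+k'\equiv 0\ (\mathrm{mod}\ m \text{ or } \mathrm{mod}\ 2)$ holds; for $n=2$ the mod-$2$ conditions degenerate and one is left with a transparent count of which triangles meet which circles. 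Second, I would show the bipartite incidence is ``rich enough'': each of the $N_1 = \tfrac12(m-1)$ circles contains at least one type (3b) point, and, more strongly, for any two distinct circles there is a triangle meeting one but not the other — this separates the generators and forces full rank. Concretely, fixing a circle indexed by $k$, the triangle whose second eigenvalue data isolates the congruence $k'\equiv 0\ (\mathrm{mod}\ m)$ (with the remaining free parameter chosen generically) meets that circle and, by the ``exactly one congruence'' rule, meets a controlled set of others; comparing two such families for $k\neq k'$ produces the separating triangle. Finally, assemble: the resulting $\ZZ_2$ incidence matrix has no nonzero vector in the left kernel, hence $f$ is surjective.

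\textbf{Main obstacle.} The delicate part is the bookkeeping that pins down \emph{exactly which} triples of circles each triangle of type (3b) is glued to, and verifying that the resulting $\ZZ_2$ incidence matrix genuinely has full row rank rather than merely being ``generically'' full rank. One must be careful that the three edges of a single triangle can be attached to three \emph{distinct} circles (as asserted in Section \ref{sec:intersection-tr}) and handle possible coincidences when $m$ is small; a clean way around this is to avoid computing the rank directly and instead argue by the separation property above (for every pair of circles, exhibit a triangle distinguishing them, plus one triangle hitting each circle), which is more robust to edge cases. I expect the proof to reduce, after the arithmetic translation, to an elementary but slightly intricate counting lemma about residues mod $m$, of the same flavor as the point counts already carried out in Section \ref{sec:counting}.
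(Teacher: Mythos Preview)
Your setup is correct and matches the paper: the map $f$ sends the boundary of a type (3b) triangle to the sum (mod $2$) of the three meridian classes of the M\"obius bands it meets, and surjectivity is equivalent to the $\ZZ_2$-incidence matrix having full row rank $N_1$.

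However, your proposed key step contains a genuine gap. The separation property you state --- ``for any two distinct circles there is a triangle meeting one but not the other'' --- does \emph{not} force full row rank over $\ZZ_2$. A counterexample: the $3\times 3$ matrix with rows $(1,0,1)$, $(0,1,1)$, $(1,1,0)$ over $\ZZ_2$ satisfies your separation property (every pair of rows is distinguished by some column) but has rank $2$, since the sum of the three rows vanishes. Pairwise separation detects whether rows are distinct as vectors, not whether they are linearly independent. So the strategy as written would not close the argument, and the ``slightly intricate counting lemma'' you anticipate would have to be replaced by a genuinely different linear-algebraic argument.

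The paper's proof avoids this entirely by a direct hit. Once one writes down the explicit formula
\[
 f\big(\partial\bm{T}_{\nu,\epsilon_a,\epsilon_b}\big) = [\ell_a] + [\ell_b] + [\ell_{a-b}] \quad \text{in } \ZZ_2^{N_1},
\]
(where $\ell_k$ is the meridian of the M\"obius band indexed by $\epsilon_k = e^{2\pi i k/m}$, with $\ell_{-k}=\ell_k$), one simply specializes to $b=a$, $a\mapsto 2a$: then $f(\partial\bm{T}_{\nu,\epsilon_{2a},\epsilon_a}) = [\ell_{2a}] + [\ell_a] + [\ell_a] = [\ell_{2a}]$. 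Since $m$ is odd, $2$ is invertible in $\ZZ_m$, so every generator $[\ell_k]$ arises this way and $f$ is surjective. The whole proof is two lines once the formula for $f$ is in hand; there is no need for rank computations or separation arguments.
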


\begin{proof}
There are $N_2= (m-1)(m-2)/2$ triangles.
Each triangle corresponds to representations of the form
 $(A,B)$, with $A\sim \diag(\lambda,\lambda, -\lambda)$ and $B\sim
\diag (\nu,\nu \epsilon,\nu\epsilon')$ where $\nu \in \bm{\mu}_{3m}$
and $\lambda^2=\nu^m$, and $\epsilon,\epsilon'\in \bm{\mu}_m$ with
$\epsilon\neq 1$, $\epsilon'\neq 1$, $\epsilon\neq \epsilon'$,
satisfying that $\nu^3=(\epsilon\epsilon')^{-1}$. Call
$\bm{T}_{\nu,\epsilon,\epsilon'}$ one of these triangles.

There are $N_1=(m-1)/2$ collapsed M\"obius bands, corresponding to
representations with $A_1\sim \diag(\lambda,-\lambda)$, $B_1\sim \diag(\nu,\epsilon\nu)$,
where $\epsilon \in \bm{\mu}_m^*$. And each of them is parametrized by
$\epsilon^{\pm 1}$, namely the ratio of the two eigenvalues of $B_1$.
Call $\ell_k$ the meridian of the M\"obius band 
corresponding to $\epsilon_k= e^{2\pi i k/m}$, $k\not\equiv 0 \pmod m$.
Note that $\ell_{-k}=\ell_k$.

When looking at the boundary of $\bm{T}_{\nu,\epsilon,\epsilon'}$,
we get three segments, corresponding to choosing two of the three eigenvalues of
$B$ and taking their ratio. The map $f$ is nothing but seeing this boundary as part of the homology of $H_1(X)$, so it is given by
$$
 f \left(\partial\bm{T}_{\nu,\epsilon_a,\epsilon_b}\right) = [\ell_a]+[\ell_b]+[\ell_{a-b}],
$$
where the coefficients on the right are understood modulo $2$. Here $a,b,a-b\not\equiv 0\pmod m$. Observe that
$$
 f \left(\partial\bm{T}_{\nu,\epsilon_{2a},\epsilon_{a}}\right) = [\ell_{2a}]+[\ell_a]+[\ell_{a}] = [\ell_{2a}].
$$
Since $m$ is odd, any element of $\ZZ_m$ can be written as $2a$ for some $a \in \ZZ_m$, showing that
the map $f$ is surjective.
\end{proof}

By the previous lemma, since $f$ is surjective we get $H_1(\cY_3) = 0$ and $H_2(\cY_3) = \textrm{Ker}\, f = \ZZ^{N_2}$. In this way, we get that the homology of $\cY_3$ is
\begin{equation}\label{eqn:H(Y)}
 H_0(\cY_3) = \ZZ, \quad H_1(\cY_3) = 0, \quad H_2(\cY_3) = \ZZ^{\frac{1}{2}(m-1)(m-2)}, 
 \quad H_k(\cY_3) = 0 \textrm{ for $k \geq 3$}.
\end{equation}

Furthermore, the previous computations gives us something a bit stronger. 

\begin{cor}\label{cor:Y3-simply-connected}
The character variety $\cY_3$ is simply-connected.
\end{cor}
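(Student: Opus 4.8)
The plan is to present $\cY_3$ as a CW‑complex and kill $\pi_1$ by van Kampen's theorem. As in Section~\ref{sec:homological-invariants-homology}, write $X = \cY_3^{\mathrm{TR}} \cup Y_{(2,1)}$ and let $Y$ be the union of the closures of the $N_2 = \frac12(m-1)(m-2)$ triangles of type (3b). Then $\cY_3 = X \cup Y$, where $Y$ is a disjoint union of $N_2$ closed $2$-discs and $X \cap Y$ is the disjoint union of their $N_2$ boundary circles; so $\cY_3$ is obtained from $X$ by attaching $N_2$ two‑cells. First I would record that, collapsing the contractible $\cY_3^{\mathrm{TR}}$, the space $X$ is a wedge of the $N_1 = \frac12(m-1)$ projective planes coming from the Möbius bands $\overline{\bm{M}}$, so $\pi_1(X) = \langle x_1,\dots,x_{N_1} \mid x_k^2 = 1\rangle = \ZZ_2 * \cdots * \ZZ_2$, where $x_k$ generates $\pi_1$ of the $k$-th $\RR P^2$, i.e.\ is represented by the core of the $k$-th Möbius band. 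Van Kampen then gives $\pi_1(\cY_3) = \pi_1(X)/\langle\langle w_1,\dots,w_{N_2}\rangle\rangle$, with $w_j \in \pi_1(X)$ the class of the $j$-th attaching circle (well defined up to conjugation and inversion, which is all that matters for the normal closure).

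Next I would identify the words $w_j$ by upgrading the $H_1$-computation in the proof of Lemma~\ref{lem:f-surjective} to $\pi_1$. Indexing the Möbius bands by $\bar k \in ((\ZZ/m\ZZ)\setminus\{0\})/\{\pm1\}$ via $\overline{\bm{M}}_{\bar k} \leftrightarrow \epsilon_k = e^{2\pi i k/m}$ as in that proof, the $j$-th disc is the closure of a triangle $\bm{T}_{\nu,\epsilon_a,\epsilon_b}$ whose boundary is the concatenation of three fibre arcs lying in $\overline{\bm{M}}_{\bar a}$, $\overline{\bm{M}}_{\bar b}$, $\overline{\bm{M}}_{\overline{a-b}}$, with endpoints at totally reducible representations. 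Since a fibre arc of a Möbius band, once $\cY_3^{\mathrm{TR}}$ is collapsed, becomes a loop representing the nontrivial element of $\pi_1(\RR P^2)$, and the bands are pairwise disjoint away from the contractible $\cY_3^{\mathrm{TR}}$, this yields $w_{\nu,a,b} = x_{\bar a}\,x_{\bar b}\,x_{\overline{a-b}}$ in $\pi_1(X)$ — exactly the lift of the relation $f(\partial\bm{T}_{\nu,\epsilon_a,\epsilon_b}) = [\ell_a]+[\ell_b]+[\ell_{a-b}]$ already computed there.

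Finally I would conclude with the same arithmetic trick used for the surjectivity of $f$: for each $c \not\equiv 0 \pmod m$, the triangle with $(a,b) = (2c,c)$ exists (for a suitable $\nu \in \bm{\mu}_{3m}$, since $2c, c, c \not\equiv 0$ and $2c \not\equiv c$ as $m$ is odd), and its attaching word is $x_{\overline{2c}}\,x_{\bar c}\,x_{\bar c} = x_{\overline{2c}}$ by $x_{\bar c}^2 = 1$; hence $x_{\overline{2c}} = 1$ in $\pi_1(\cY_3)$. Because $m$ is odd, $c \mapsto 2c$ permutes $(\ZZ/m\ZZ)\setminus\{0\}$, so the classes $x_{\overline{2c}}$ exhaust all $N_1$ generators, and $\pi_1(\cY_3)$ is trivial. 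The main obstacle I anticipate is the middle step: one must check carefully that the boundary loop of a $(3b)$-triangle, read inside $X$, is \emph{exactly} $x_{\bar a}x_{\bar b}x_{\overline{a-b}}$ — each generator to the first power, with no spurious contributions from the collapse of $\cY_3^{\mathrm{TR}}$ — which relies on the geometric description in Section~\ref{sec:intersection-tr} that each triangle edge is a single fibre arc of a single Möbius band, together with the computation of $\pi_1$ of $\RR P^2 = \overline{\bm{M}}/\partial\overline{\bm{M}}$ generated by such an arc.
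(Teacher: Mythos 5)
Your proof is correct, and while it follows the same broad van Kampen strategy on the decomposition $\cY_3 = X \cup Y$, it is actually \emph{more rigorous} than the paper's own argument at the step that matters. The paper asserts $\pi_1(U) = \ZZ_2^{N_1}$ and concludes that surjectivity of the $H_1$-level map $f$ forces the amalgamated product to vanish. But $\pi_1$ of a wedge of $N_1$ copies of $\RR P^2$ is the free product $\ZZ_2 * \cdots * \ZZ_2$, which is nonabelian for $N_1\geq 2$, and surjectivity of the composite $\pi_1(U\cap V)\to\pi_1(U)\twoheadrightarrow H_1(U)$ does not in general imply that the normal closure of the relator words exhausts $\pi_1(U)$ (free products of $\ZZ_2$'s can have nontrivial perfect quotients). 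Your argument closes exactly this gap: by reading the attaching words $w = x_{\bar a} x_{\bar b} x_{\overline{a-b}}$ honestly in $\pi_1(X)\cong \ZZ_2 *\cdots *\ZZ_2$ and specializing to $(a,b)=(2c,c)$, you get $w = x_{\overline{2c}}\,x_{\bar c}^2 = x_{\overline{2c}}$, so the relators normally generate all of $\pi_1(X)$, which is what triviality of $\pi_1(\cY_3)$ requires. Presenting $\cY_3$ as $X$ with $N_2$ two-cells attached also sidesteps the disconnectedness of $X\cap Y$, which the paper's open-cover van Kampen glosses over. Finally, the geometric point you flag as the potential obstacle — that each edge of a $(3b)$-triangle is a single fibre arc of a single Möbius band, and such an arc, read in $\overline{\bm M}/\partial\overline{\bm M}\cong\RR P^2$, represents the generator of $\pi_1$ — is indeed correct: closing the fibre arc up along half the boundary circle gives a loop homotopic to the core of $\overline{\bm M}$, whose image generates $\pi_1(\RR P^2)=\ZZ_2$. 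In short, your proposal should be viewed as a corrected and completed version of the argument the paper sketches.
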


\begin{proof}
Take $U, V \subset \cY_3$ open sets slightly bigger than the closed sets $X, Y \subset \cY_3$ respectively, such that $U$ retracts to $X$, $V$ retracts to $Y$, and $U \cap Y$ retracts to $X \cap Y$. We collapse the set of totally reducible representations $\cY_3^{\textrm{TR}} \subset \cY_3$ and we use it as basepoint for the fundamental groups. By the proof above, $U$ is equivalent to a bouquet of $N_1$ real projective planes, so $\pi_1(U) = \ZZ_2^{N_1}$. In the same vein, $V$ is simply-connected. The intersection has $\pi_1(U \cap V) = \ZZ^{N_2}$ and the map induced by the inclusion is precisely $f: \pi_1(U \cap V) = \ZZ^{N_2} \to \pi_1(U) = \ZZ_2^{N_1}$. By Lemma \ref{lem:f-surjective}, the morphsm $f$ is surjective so the fibered product $\pi_1(U) \star_{\pi_1(U \cap V) }  \pi_1(V)$ vanishes, which agrees with $\pi_1(\cY_3)$ by Seifert-van Kampen theorem.
\end{proof}

\subsection{The inclusion $\cY_3 \hookrightarrow \cX_3$}
Now we compare the $\SU(3)$-character variety $\cY_3 = X(\G_{n,m}, \SU(3))$ with the $\SL(3,\CC)$-character variety $\cX_3 = X(\G_{n,m}, \SL(3, \CC))$. From \cite{munozporti2016}, we observe that $\cX_3$ admits an analogous stratification to the one of $\cY_3$ presented in Section \ref{sec:SU3char}. With the same notation as in Section \ref{sec:SU3char}, for $\cX_3$ these strata are the following:
\begin{enumerate}
	\item[\textbf{(1)}] This corresponds to totally reducible representations. By \cite[Proposition 8.1]{munozporti2016}, this stratum in $\cX_3$ is isomorphic to $\CC$. In particular, it is contractible and thus homotopically equivalent to $\cY^{\textrm{TR}}_3$.
	\item[\textbf{(2)}] This corresponds to partially reducible representations. There are two cases:
		\begin{itemize}
			\item The analogues of the cylinders $\bm{C}$. By \cite[Proposition 8.1]{munozporti2016}, in $\cX_3$ they correspond to $(\CC - \{0,1\}) \times \CC^*$. This space is not homotopically equivalent to $\bm{C}$ since $H_2((\CC - \{0,1\}) \times \CC^*) = H_1(\CC - \{0,1\}) \otimes H_1(\CC^*) \neq 0$ but $H_2(\bm{C}) = 0$.
			\item The analogues of the M\"obius bands $\bm{M}$. By \cite[Proposition 8.1]{munozporti2016}, in $\cX_3$ they correspond to $\CC^2 - \{y = 0\} - \{y = x^2\}$. Using the Mayer-Vietoris exact sequence we find that this later space has $H_2(\CC^2 - \{y = 0\} - \{y = x^2\}) \neq 0$ but $H_2(\bm{M}) = 0$, showing that they are not homotopically equivalent.
		\end{itemize} 
	\item[\textbf{(3a)}] This is the space of irreducible representations with different eigenvalues. By \cite[Proposition 8.3]{munozporti2016}, in $\cX_3$ this corresponds to the quotient $\mathcal{M} / (T \times_D T)$. Here, $\mathcal{M} \subset \GL(3, \CC)$ is the stable locus of the action of $T \times_D T$ on $\GL(3, \CC)$, where $T = (\CC^*)^3$ is the set of diagonal matrices, the first factor acts on $\GL(3, \CC)$ by multiplication on the left, and the second by multiplication on the right. The subset $D = \{(\lambda \Id, \lambda^{-1} \Id)\,|\, \lambda \in \CC^*\} \subset T \times T$ is the subgroup of multiples of the identity acting trivially. Notices that, by setting $\mathbb{L}=1$ in its motive \cite[Proposition 8.3]{munozporti2016}, we get that its Euler characteristic with compact support agrees with the one of the stratum (3a) of $\cY_3$.
	\item[\textbf{(3b)}] This corresponds to irreducible representations with a coincident eigenvalue. By \cite[Proposition 8.2]{munozporti2016}, in $\cX_3$ they correspond to a $(\CC^*)^2 - \{x+y = 1\}$.
	Using the Mayer-Vietoris exact sequence, $H_2((\CC^*)^2 - \{x+y = 1\}) \neq 0$. But the corresponding strata in $\cY_3$ are contractible, so they are not homotopically equivalent. 
\end{enumerate}

This analysis evidences that, for $n,m > 2$, not all the strata are homotopically equivalent. However, it may happen that the inclusion $\cY_3 \hookrightarrow \cX_3$ is still a homotopy equivalence, even though such homotopy does not preserve the stratification by type of the representation. We show that this is indeed the case when $n=2$.

\begin{thm}
For $n=2$ and odd $m$, the inclusion $\cY_3 \hookrightarrow \cX_3$ is a homology equivalence.
\end{thm}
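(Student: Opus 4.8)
The strategy is to compare the two varieties stratum by stratum using the Mayer–Vietoris sequences already set up for $\cY_3$ in Section~\ref{sec:homological-invariants-homology} and to build the analogous decomposition of $\cX_3 = X(\G_{2,m},\SL(3,\CC))$ from the explicit descriptions in \cite{munozporti2016}. Concretely, for $n=2$ there are no strata of type (3a) (as noted in Section~\ref{sec:homological-invariants-homology}), and for $\cX_3$ the corresponding pieces from \cite[Prop.~8.1--8.2]{munozporti2016} are: the totally reducible locus $\cong\CC$, the partially reducible components which (since $n=2$ means $\epsilon=-1$, so only the ``M\"obius'' case occurs) are copies of $\CC^2-\{y=0\}-\{y=x^2\}$, and the type-(3b) irreducible components which are copies of $(\CC^*)^2-\{x+y=1\}$. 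I would first record the homotopy types of these complex strata: $\CC^2-\{y=0\}-\{y=x^2\}$ deformation retracts onto a wedge-like $2$-complex with $H_0=\ZZ$, $H_1=\ZZ^2$, $H_2=\ZZ$ (its compactly supported Euler characteristic is $1$, matching $\chi_c(\overline{\bm M})-\chi_c(\partial)$ computations), and $(\CC^*)^2-\{x+y=1\}$ has $H_0=\ZZ$, $H_1=\ZZ^2$, $H_2=\ZZ$ as well. The key point is that although no individual stratum of $\cX_3$ is homotopy equivalent to the corresponding stratum of $\cY_3$, when assembled along their intersection patterns the total homologies coincide.

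The main step is to run a Mayer–Vietoris argument for $\cX_3$ parallel to the one for $\cY_3$: set $X' = (\text{reducible locus})$ = totally reducible $\cup$ partially reducible, and $Y' = \bigcup(\text{closures of type-(3b) components})$, with $X'\cup Y'$ homotopy equivalent to $\cX_3$ and $X'\cap Y'$ the union of the ``meridian'' loci along which the (3b) strata are glued. I would then compute $H_*(X')$, $H_*(Y')$, $H_*(X'\cap Y')$ and the connecting map $f'$, and show directly that the long exact sequence forces $H_0(\cX_3)=\ZZ$, $H_1(\cX_3)=0$, $H_2(\cX_3)=\ZZ^{N_2}$ with $N_2=\tfrac12(m-1)(m-2)$, and $H_k(\cX_3)=0$ for $k\geq 3$ --- i.e.\ exactly the homology~(\ref{eqn:H(Y)}) of $\cY_3$. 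The surjectivity of $f'$ should follow from the same arithmetic identity $f(\partial\bm T_{\nu,\epsilon_{2a},\epsilon_a})=[\ell_{2a}]$ used in Lemma~\ref{lem:f-surjective}, now over $\ZZ$ (or after tensoring) rather than $\ZZ_2$; since $m$ is odd every residue is of the form $2a$, giving surjectivity. One then concludes that the inclusion $\cY_3\hookrightarrow\cX_3$ induces an isomorphism on all homology groups, because it is compatible with the two Mayer–Vietoris decompositions and induces isomorphisms (or at least maps fitting into a five-lemma diagram) on each of $H_*(X)\to H_*(X')$, $H_*(Y)\to H_*(Y')$, $H_*(X\cap Y)\to H_*(X'\cap Y')$ --- the last being an equivalence of disjoint unions of circles, and the first two being homology isomorphisms in the relevant degrees once one checks the extra $H_1,H_2$ classes in the complex strata die in $\cX_3$ just as the analogous considerations were irrelevant for $\cY_3$.

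The hard part will be the bookkeeping in the middle step: verifying that the ``excess'' homology of the non-compact complex strata (the $H_1$ of $\CC^*$-factors and loops around the removed lines/curves) does not survive into $H_*(\cX_3)$, which requires tracking how the partially reducible pieces $\CC^2-\{y=0\}-\{y=x^2\}$ are attached to the reducible locus $\CC$ and how their loops bound once the closure is taken. In the compact case this was handled cleanly because collapsing $\cY_3^{\mathrm{TR}}$ turned M\"obius bands into $\RR P^2$'s; the complex analogue needs the observation that the reducible locus $\CC$ is contractible and that the attaching circles (the ``boundary'' meridians) become nullhomotopic in $X'$ after this collapse, so that $H_*(X')$ reduces to a wedge of the $H_2$-classes only. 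I expect that, modulo this attaching-map analysis, the long exact sequences on both sides are literally isomorphic term by term, so that the comparison map is a homology isomorphism; making the attaching maps precise from the coordinates in \cite{munozporti2016} is where the real work lies.
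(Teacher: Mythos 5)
Your overall strategy is the same as the paper's: run a parallel Mayer--Vietoris argument for $\cX_3$ with $\hat{X}$ the reducible locus and $\hat{Y}$ the union of the closures of the (3b) strata, then compare the two long exact sequences via the inclusion. However, the attaching-map analysis you flag as ``where the real work lies'' is also where your proposal goes wrong, and the error is not cosmetic. You claim that after collapsing $\cX_3^{\mathrm{TR}}$ the attaching circles become nullhomotopic in $\hat{X}$, so that $H_*(\hat{X})$ ``reduces to a wedge of the $H_2$-classes only.'' This is backwards. The closure of each partially reducible component is $\CC^2-\{y=0\}\cong\CC\times\CC^*$, and it is glued to $\cX_3^{\mathrm{TR}}\cong\CC$ along the curve $D=\{y=x^2\}\cong\CC^*$. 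The crucial arithmetic fact is that the inclusion $D\hookrightarrow\CC\times\CC^*$ induces \emph{multiplication by $2$} on $H_1$ (the loop $x=e^{2\pi i t}$ maps to $y=e^{4\pi i t}$, winding twice around $\{y=0\}$), precisely reflecting the identification $\lambda\sim-\lambda$ of eigenvalues. From the long exact sequence of the pair $(\CC\times\CC^*,D)$ one then gets $H_1((\CC\times\CC^*)/D)=\ZZ_2$ and $H_2((\CC\times\CC^*)/D)=0$: the meridians become $2$-torsion, not nullhomotopic, and it is the $H_2$-classes that die, not the $H_1$-classes. Consequently $H_1(\hat{X})=\ZZ_2^{N_1}$ and $H_2(\hat{X})=0$.

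This matters because if you feed $H_1(\hat{X})=0$ and $H_2(\hat{X})\neq 0$ into the Mayer--Vietoris sequence, as your proposal would, you do not recover the homology of $\cY_3$ and the comparison with the $\cY_3$-side collapses. Similarly, your suggestion to redo Lemma~\ref{lem:f-surjective} ``over $\ZZ$ rather than $\ZZ_2$'' is misguided: the target $H_1(\hat{X})=\ZZ_2^{N_1}$ is itself $2$-torsion, so the paper's $\ZZ_2$-computation applies verbatim once the correct $H_*(\hat{X})$ is in place. So the missing idea in your write-up is exactly this multiplication-by-$2$ observation; with it the rest of your plan (computing $H_*(\hat{Y})$ from disjoint copies of $\CC^2$, computing $H_*(\hat{X}\cap\hat{Y})$ from triples of lines retracting to triangles, and matching the two Mayer--Vietoris sequences via the inclusion) goes through as you describe.
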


\begin{proof}
Let us compute the homology of $\cX_3$ for $n = 2$ and $m$ odd, following the strategy of Section \ref{sec:homological-invariants-homology}. As there, we take $\hat{X} = \cX_3^{\mathrm{TR}} \cup X_{(2,1)}$, which in this case is $\CC^2$ 
with $N_1 = \frac12 (m-1)$ copies of $\CC^2 - \{y = 0\} - \{y = x^2\}$ attached through the one of the missing lines. 
The closure of a component is $\CC^2-\{y=0\}=\CC\x \CC^*$, and we
glue to $\cX_3^{\mathrm{TR}}$ along $D=\CC^*=\{y=x^2\}$. When we collapse $\cX_3^{\mathrm{TR}}$,
we get $(\CC\x\CC^*)/D$ whose homology is given by 
 $H_k((\CC\x\CC^*)/D)= H_k(\CC\x\CC^*,D)$, for $k\geq 1$.   
  Using the exact sequence of the pair, and that the inclusion $H_1(D)=\ZZ \to H_1(\CC\x \CC^*)=\ZZ$ is multiplication by
  $2$ (this follows from the fact that the loop $x = e^{2 \pi i t}$ goes to $(x,y) = (e^{2 \pi i t}, e^{4 \pi i t})$
  that winds twice around $y=0$), we get 
 $$
  H_1((\CC\x\CC^*)/D)= \ZZ_2,\qquad  H_k((\CC\x\CC^*)/D)= 0, k\geq 2
 $$  
Therefore $H_1(\hat{X})=\ZZ_2^{N_1}$ and $H_k(\hat{X})=0$ for $k\geq 2$.

Now, let $\hat{Y}$ be the union of the closures of the $N_2 = \frac12 (m-1)(m-2)$ strata of case (3b). 
These are $N_2$ copies of $\CC^2$, and each copy is attached to $\hat{X}$ through three lines 
intersecting pairwise at three different points. 
Therefore 
$H_0(\hat{Y}) = \ZZ^{N_2}$ and $H_k(\hat{Y}) = 0$ for $k \geq 1$. The intersection $\hat{X} \cap \hat{Y}$ 
is the collection of the $3N_2$ (complex) lines intersecting in triplets. Each of these arrangements $L$ of three 
(complex, affine) lines retracts to a triangle, hence it has $H_2(L)=0$, $H_1(L)=\ZZ$ and $H_0(L)=\ZZ$. Thus 
$H_0(\hat{X} \cap \hat{Y}) = H_1(\hat{X} \cap \hat{Y}) = \ZZ^{N_2}$,
and $H_k(\hat{X} \cap \hat{Y}) = 0$ for $k \geq 2$.

Now, applying the Mayer-Vietoris long exact sequence we get a long exact sequence 
\begin{center}
\begin{tikzpicture}[descr/.style={fill=white,inner sep=1.5pt}]
        \matrix (m) [
            matrix of math nodes,
            row sep=1em,
            column sep=2.5em,
            text height=1.5ex, text depth=0.25ex
        ]
        {  0  & H_2(\hat{X} \cap \hat{Y}) = 0  & H_2(\hat{X}) \oplus H_2(\hat{Y}) = 0  & H_2(\cX_3) & \\
            & H_1(\hat{X} \cap \hat{Y}) = \ZZ^{N_2} & H_1(\hat{X}) \oplus H_1(\hat{Y}) = \ZZ_2^{N_1}& H_1(\cX_3) &  \\
            & H_0(\hat{X} \cap \hat{Y}) = \ZZ^{N_2} & H_0(\hat{X}) \oplus H_0(\hat{Y}) = \ZZ^{N_2+1} & H_0(\cX_3) = \ZZ & 0\\
        };

        \path[overlay,->, font=\scriptsize,>=latex]
        (m-1-1) edge (m-1-2)
        (m-1-2) edge (m-1-3)
        (m-1-3) edge (m-1-4)
        (m-1-4) edge[out=355,in=175]  (m-2-2)
        (m-2-2) edge (m-2-3)
        (m-2-3) edge (m-2-4)
        (m-2-4) edge[out=355,in=175]  (m-3-2)
        (m-3-2) edge (m-3-3)
        (m-3-3) edge (m-3-4)
        (m-3-4) edge (m-3-5);
\end{tikzpicture}
\end{center}
Let $\iota: \cY_3 \hookrightarrow \cX_3$ be the inclusion map. Observe that, if we restrict to the stratum $\iota: X \cap Y \hookrightarrow \hat{X} \cap \hat{Y}$, the induced map in homology is an isomorphism since both spaces are homotopically equivalent to $N_2$ copies of $S^1$ and $X \cap Y$ are precisely the generators of the homology of $\hat{X} \cap \hat{Y}$. 
Also, regarding the inclusion $\iota: X\to \hat{X}$, we recall that the parameter $y$ of $\hat{X}$ parametrizes the
eigenvalue $\lambda$ modulo $\lambda\sim -\lambda$, hence the generators of $H_1(X)$ map to generators of $H_1(\hat{X})$
producing an isomorphism.
In this manner, we get the two long exact sequences
	\[
\begin{displaystyle}
   \xymatrix
   {0 \ar[r] & H_2(\cX_3) \ar[r] & H_1(\hat{X} \cap \hat{Y}) = \ZZ^{N_2} \ar[r]  & H_1(\hat{X})=\ZZ_2^{N_2}   \ar[r]& H_1(\cX_3) \ar[r]& 0 \\
   0 \ar[r] & H_2(\cY_3) \ar[u]^{\iota_*} \ar[r] & H_1({X} \cap {Y}) = \ZZ^{N_2} \ar[u]^{\cong} \ar[r] 
   & H_1({X})=\ZZ_2^{N_2}\ar[r]\ar[u]^{\cong}  & 0  \\
      }
\end{displaystyle}   
\]
This proves that $\iota_*$ is an isomorphism, and $H_1(\cX_3)=0$. Therefore $\iota$ is a homology equivalence.
\end{proof}

\begin{cor}\label{cor:X3-simply-connected}
The character variety $\cX_3$ is simply-connected.
\end{cor}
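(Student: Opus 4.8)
The plan is to rerun the Seifert-van Kampen argument of Corollary~\ref{cor:Y3-simply-connected}, this time for $\cX_3$ (in the case $n=2$, $m$ odd at hand), feeding in the description of the strata of $\cX_3$ used in the proof of the preceding theorem. As there, put $\hat X=\cX_3^{\mathrm{TR}}\cup X_{(2,1)}$ and let $\hat Y$ be the union of the closures of the $N_2=\tfrac12(m-1)(m-2)$ strata of type $(3b)$, thickened to open sets $\hat U\simeq\hat X$, $\hat V\simeq\hat Y$ with $\hat U\cap\hat V\simeq\hat X\cap\hat Y$, so that $\cX_3=\hat U\cup\hat V$. Since $\cX_3^{\mathrm{TR}}\cong\CC^2$ is contractible and a subcomplex, collapsing it does not change the fundamental group, and I would use it as basepoint throughout.

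First I would compute $\pi_1(\hat X)$. Each component of $X_{(2,1)}$ has closure $\CC^2-\{y=0\}=\CC\times\CC^*$ glued to $\cX_3^{\mathrm{TR}}$ along the curve $D=\{y=x^2\}\cong\CC^*$; as recorded in the preceding theorem's proof, $D\hookrightarrow\CC\times\CC^*$ is multiplication by $2$ on $\pi_1$, so after collapsing $\cX_3^{\mathrm{TR}}$ the quotient $(\CC\times\CC^*)/D$ is the mapping cone of a degree-$2$ self-map of $S^1$, i.e.\ it is homotopy equivalent to $\RR P^2$. Hence $\hat X$ collapses to a wedge of $N_1=\tfrac12(m-1)$ copies of $\RR P^2$ and $\pi_1(\hat X)=\langle\,\ell_1,\dots,\ell_{N_1}\mid \ell_k^2=1\,\rangle$ is the free product of $N_1$ copies of $\ZZ_2$ --- exactly the group of Lemma~\ref{lem:f-surjective}, with $\ell_k$ the core loop of the component indexed by $\epsilon_k$. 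Each component of $\hat Y$ is a copy of $\CC^2$, hence simply connected, and each component of $\hat X\cap\hat Y$ is an arrangement of three pairwise-meeting complex lines --- the three edges of a $(3b)$-triangle $\bm T_{\nu,\epsilon_a,\epsilon_b}$ --- which retracts onto a circle whose generator is the ``triangle loop'' $\gamma_j$.

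Attaching the $N_2$ copies of $\CC^2$ to $\hat X$ one at a time, each along a connected circle, Seifert-van Kampen gives $\pi_1(\cX_3)=\pi_1(\hat X)\big/\langle\!\langle\,\iota_*\gamma_1,\dots,\iota_*\gamma_{N_2}\,\rangle\!\rangle$. The three sides of $\bm T_{\nu,\epsilon_a,\epsilon_b}$ are fibres of the M\"obius-type components indexed by $a$, $b$ and $a-b$, and once $\cX_3^{\mathrm{TR}}$ is collapsed the three vertices (totally reducible representations) all become the basepoint, so $\iota_*\gamma_j=\ell_a\ell_b\ell_{a-b}$ in $\pi_1(\hat X)$ --- the very relation computed in the proof of Lemma~\ref{lem:f-surjective}. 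Specialising to $b=a$ gives $\iota_*\gamma=\ell_{2a}\ell_a\ell_a=\ell_{2a}$, and since $m$ is odd $a\mapsto 2a$ is a bijection of $\ZZ_m$, so every generator $\ell_k$ lies in the normal subgroup being killed. Therefore $\pi_1(\cX_3)=1$.

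The step I expect to require the most care --- the only place where this is not a verbatim transcription of Lemma~\ref{lem:f-surjective} and Corollary~\ref{cor:Y3-simply-connected} --- is checking that the M\"obius-type strata of $\cX_3$ still contribute only a $\ZZ_2$ to $\pi_1(\hat X)$ (equivalently, that $(\CC\times\CC^*)/D\simeq\RR P^2$, so that complexification creates no extra loops) and that the triangle loop $\gamma_j$ maps precisely to the word $\ell_a\ell_b\ell_{a-b}$, the connecting arcs through the contractible stratum $\cX_3^{\mathrm{TR}}$ contributing nothing. Alternatively one may avoid the direct computation entirely: the homology equivalence of the preceding theorem is in fact a homotopy equivalence, since it restricts to homotopy equivalences between the $\SU(3)$- and $\SL(3,\CC)$-versions of $\hat X$ (wedges of $\RR P^2$'s), of $\hat Y$ (disjoint unions of contractible pieces) and of $\hat X\cap\hat Y$ (disjoint unions of circles, on which the map is a $\pi_1$-isomorphism), and these decompositions are homotopy pushouts; then $\iota\colon\cY_3\hookrightarrow\cX_3$ is a homotopy equivalence and the statement follows from Corollary~\ref{cor:Y3-simply-connected}.
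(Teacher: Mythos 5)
Your proposal is correct and follows essentially the same route as the paper: both use Seifert--van Kampen with the decomposition $\cX_3=\hat X\cup\hat Y$, the identification $(\CC\times\CC^*)/D\simeq\RR P^2$ making $\hat X$ a wedge of $N_1$ projective planes, and the triangle relations $\ell_a\ell_b\ell_{a-b}$ from Lemma~\ref{lem:f-surjective}. The difference is one of emphasis rather than substance: the paper's proof is terser --- it notes $\pi_1(\hat X)=\ZZ_2^{N_1}$, $\pi_1(\hat Y)=0$, $\pi_1(\hat X\cap\hat Y)=\ZZ^{N_2}$, invokes the already-established $H_1(\cX_3)=0$ to assert surjectivity of $\pi_1(\hat X\cap\hat Y)\to\pi_1(\hat X)$, and then says the rest is verbatim as in Corollary~\ref{cor:Y3-simply-connected} --- whereas you redo the group-theoretic computation explicitly, showing that the words $\iota_*\gamma_j=\ell_a\ell_b\ell_{a-b}$ normally generate all of $\pi_1(\hat X)$ via the specialization $b=a$ and oddness of $m$. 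Your version is actually a bit safer: the implication ``$H_1=0\Rightarrow\pi_1$-map surjective'' is not a general fact (it passes through abelianization), and what Seifert--van Kampen really needs is that the normal closure of the image is everything, which is precisely what your explicit word computation establishes. Your alternative suggestion --- upgrading the homology equivalence $\cY_3\hookrightarrow\cX_3$ to a homotopy equivalence via homotopy pushouts and then quoting Corollary~\ref{cor:Y3-simply-connected} --- is also sound but is in effect the content of the paper's final corollary; to make it airtight one has to check the restricted inclusions $X\hookrightarrow\hat X$ etc.\ are genuine homotopy equivalences (not just homology equivalences), which for wedges of $\RR P^2$'s requires an argument on $\pi_1$ and the universal covers, since these spaces are not simple.
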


\begin{proof}
Notice that $\pi_1(\hat{X}) = \ZZ_2^{N_1}$ since $\pi_1((\CC\x\CC^*)/D) = \textrm{Coker}\, (\pi_1(D) = H_1(D)  \to \pi_1(\CC\x\CC^*) = H_1(\CC\x\CC^*)) = \ZZ_2$. Similarly $\pi_1(\hat{Y}) = 0$, since each connected component is contractible, and $\pi_1(\hat{X} \cap \hat{Y}) = \ZZ^{N_2}$. Moreover, since $H_1(\cX_3) = 0$, the map $\pi_1(\hat{X} \cap \hat{Y}) = \ZZ^{N_2} \to \pi_1(\hat{X}) = \ZZ_2^{N_1}$ is surjective. At this point, the the proof works verbatim to the one of Corollary \ref{cor:Y3-simply-connected}.
\end{proof}

\begin{cor}
For $n=2$ and odd $m$, the inclusion $\cY_3 \hookrightarrow \cX_3$ is a homotopy equivalence.
\end{cor}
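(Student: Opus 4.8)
The plan is to deduce this from the homology equivalence established in the previous theorem, together with the simple-connectivity of both character varieties, via the homological form of Whitehead's theorem.

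First I would collect the available inputs. By the previous theorem the inclusion $\iota\colon \cY_3 \hookrightarrow \cX_3$ induces isomorphisms $\iota_*\colon H_k(\cY_3;\ZZ) \to H_k(\cX_3;\ZZ)$ in every degree; note that the Mayer--Vietoris argument there is carried out with integer coefficients, which is exactly what will be needed. By Corollary \ref{cor:Y3-simply-connected} the space $\cY_3$ is simply-connected, and by Corollary \ref{cor:X3-simply-connected} so is $\cX_3$. Finally, both $\cY_3$ and $\cX_3$ are real semialgebraic sets --- this is explicit from the stratified descriptions of Sections \ref{sec:SU3char} and \ref{sec:intersection-tr} for $\cY_3$, and from \cite{munozporti2016} for $\cX_3$ --- hence they are triangulable and in particular have the homotopy type of (finite) CW complexes.

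Now I would argue as follows. Replacing $\iota$ by the inclusion into its mapping cylinder, we may regard $(\cX_3, \cY_3)$ as a pair. Since $\cY_3$ and $\cX_3$ are connected and simply-connected, the long exact homotopy sequence of the pair shows that $(\cX_3,\cY_3)$ is $1$-connected; the long exact homology sequence of the pair together with the isomorphisms $\iota_*$ gives $H_k(\cX_3,\cY_3;\ZZ)=0$ for all $k$. The relative Hurewicz theorem then forces $\pi_k(\cX_3,\cY_3)=0$ for all $k$, so $\iota$ is a weak homotopy equivalence. Since $\cY_3$ and $\cX_3$ have CW homotopy type, Whitehead's theorem upgrades $\iota$ to a homotopy equivalence. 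Equivalently, one can simply cite the statement that a map between simply-connected CW complexes inducing isomorphisms on all integral homology groups is a homotopy equivalence.

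The one point deserving attention is the CW-homotopy-type hypothesis required for Whitehead's theorem, and the fact that the homology isomorphism must hold with $\ZZ$ coefficients (for the relative Hurewicz step); neither is an obstacle here, since the earlier sections exhibit an explicit finite CW structure on $\cY_3$ and \cite{munozporti2016} provides one for $\cX_3$, and the homology equivalence was proven integrally. The remainder is a formal consequence of results already established.
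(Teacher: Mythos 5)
Your argument matches the paper's proof essentially step for step: both deduce $H_k(\cX_3,\cY_3)=0$ from the homology equivalence via the long exact sequence of the pair, apply the relative Hurewicz theorem (using simple-connectivity of both spaces, established in Corollaries \ref{cor:Y3-simply-connected} and \ref{cor:X3-simply-connected}), conclude that $\iota$ is a weak homotopy equivalence, and upgrade to a homotopy equivalence by Whitehead's theorem. Your extra care in justifying the CW homotopy type hypothesis (via semialgebraic triangulability) and in flagging the integer-coefficient requirement for Hurewicz makes explicit two points the paper leaves tacit, but the underlying route is identical.
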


\begin{proof}
This is a standard consequence of Hurewicz and Whitehead theorems. We spell out the details here for completeness. In homology, we have a long exact sequence for $k \geq 0$ for the relative homology
	\[
\begin{displaystyle}
   \xymatrix
   {\ldots \ar[r] & H_{k+1}(\cX_3, \cY_3) \ar[r] & H_k(\cY_3) \ar[r]^{\iota_*} & H_k(\cX_3) \ar[r] & H_k(\cX_3, \cY_3) \ar[r] & \ldots    \\
      }
\end{displaystyle}   
\]
Since $\iota_*$ is an isomorphism for all $k \geq 0$, we get that $H_k(\cX_3, \cY_3) = 0$ for all $k$. By Hurewicz theorem, this implies that the relative homotopy $\pi_k(\cX_3, \cY_3) = 0$ for $k \geq 2$. Recall that the relative version requires $f_\#: \pi_2(\cY_3) \to \pi_2(\cX_3)$ to be surjective, which is automatic by the absolute Hurewicz theorem using that both $\cX_3, \cY_3$ are simply-connected. Now, using the long exact sequence in homotopy
	\[
\begin{displaystyle}
   \xymatrix
   {\ldots \ar[r] & \pi_{k+1}(\cX_3, \cY_3) = 0 \ar[r] & \pi_k(\cY_3) \ar[r]^{\iota_\#}  & \pi_k(\cX_3) \ar[r] & \pi_k(\cX_3, \cY_3) = 0  \ar[r] & \ldots }
\end{displaystyle}   
\]
we get that the induced map in homotopy $\iota_\#$ is an isomorphism for all $k \geq 0$. Thus $\iota$ is a weak homotopy equivalence and a fortiori also a homotopy equivalence by Whitehead theorem.
\end{proof}

Motivated by this result, we predict that the inclusion $\cY_3\hookrightarrow \cX_3$ is also a homotopy equivalence 
for $n,m>2$.


\end{document}